\renewcommand\@biblabel[1]{#1.}
\newcommand{\Fcal}{\mathcal{F}}
\newcommand{\Lr}{\mathscr{L}}
\newcommand{\U}{\mathcal{U}}
\newcommand{\W}{\mathcal{W}}
\newcommand{\D}{\mathcal{D}}
\newcommand{\R}{\mathbb{R}}
\newcommand{\Z}{\mathbb{Z}}
\def\qInn{\mathop{\mathrm{qInn}}\nolimits}
\def\fl{\mathop{\mathrm{Flux}}\nolimits}
\def\ham{\mathop{\mathrm{Ham}}\nolimits}
\def\Ham{\mathop{\mathrm{\mathfrak{H}am}}\nolimits}
\def\symp{\mathop{\mathrm{Symp}}\nolimits}
\def\Symp{\mathop{\mathrm{\mathfrak{S}ymp}}\nolimits}
\def\aut{\mathop{\mathrm{Aut}}\nolimits}
\def\der{\mathop{\mathrm{Der}}\nolimits}
\def\cl{\mathop{\mathrm{Cl}}\nolimits}
\newtheorem{theoremintro}{Theorem}
\newtheorem{theoremprinc}{Theorem}
\newtheorem{theorem}{Theorem}[section]
\newtheorem{lemme}[theorem]{Lemma}
\newtheorem{cor}[theorem]{Corollary}
\newtheorem{prop}[theorem]{Proposition}
\newtheorem{defi}[theorem]{Definition}
\newtheorem{quest}[theorem]{Question}
\theoremstyle{definition} 
\newtheorem{ex}[theorem]{Example}
\theoremstyle{remark}
\newtheorem{rem}[theorem]{Remark}
\begin{document}

\title{The group of Hamiltonian automorphisms of a star product}

\author{\small
Laurent La Fuente-Gravy \\
\scriptsize{laurent.lafuente@uclouvain.be}\\
\footnotesize{Institut de Recherche en Math\'ematique et Physique, Universit\'e catholique de Louvain}\\[-7pt]
\footnotesize Chemin du cyclotron, 2, 1348 Louvain-la-Neuve, Belgium\\[-7pt]}

\maketitle


\begin{abstract}
We deform the group of Hamiltonian diffeomorphisms into a group of Hamiltonian automorphisms, $\ham(M,*)$, of a formal star product $*$
on a symplectic manifold $(M,\omega)$. We study the geometry of that group and deform the Flux morphism in the framework
of deformation quantization.
\end{abstract}

\noindent {\footnotesize {\bf Keywords:} Deformation quantization, Automorphisms of star product, Flux morphism, Hamiltonian automorphisms group.\\
{\bf Mathematics Subject Classification (2010):}  Primary 53D55, Secondary 22E65, 57N20 }

\tableofcontents


\section{Introduction}

We define the group of Hamiltonian automorphisms of a star product. 
It is a deformation of the group of Hamiltonian diffeomorphisms of a symplectic manifold in the framework of formal deformation quantization. We also study the geometric properties of this group.

The group $\ham(M,\omega)$ of Hamiltonian diffeomorphisms is a normal subgroup of $\symp_0(M,\omega)$ the
connected component of the group of all symplectomorphisms of a symplectic manifold $(M,\omega)$.
Banyaga \cite{ban2} showed that the group $\ham(M,\omega)$ is the kernel of a morphism defined
on $\symp_0(M,\omega)$ with values in $H^1_{dR,c}(M)/\Gamma(M,\omega)$ where $H^1_{dR,c}(M)$ is the first
de Rham cohomology group with compact support and $\Gamma(M,\omega)$ is the so-called Flux group. He used this 
characterisation to prove that the group $\ham(M,\omega)$ is simple when the manifold is closed, extending the results
of Thurston on volume preserving diffeomorphisms to the symplectic case. Observing that the group $\Gamma(M,\omega)$
is at most countable, he proved that the Lie algebra of $\ham(M,\omega)$ is the space of compactly supported Hamiltonian vector fields.
In 2006, Ono \cite{Ono} showed that $\Gamma(M,\omega)$ is discrete when the manifold $(M,\omega)$ is closed; this proved the famous Flux conjecture which states that $\ham(M,\omega)$ is $C^1$-closed in $\symp_0(M,\omega)$. 

A similar approach in the framework of deformation quantization ($*$ product) on a symplectic manifold leads as a first step
to the study of the group of Hamiltonian automorphisms of a star product. 

To avoid technical difficulties we will assume throughout the paper that $(M,\omega)$ is a closed symplectic manifold. Let $*$ be a star product on $(M,\omega)$. Hamiltonian automorphisms are the solutions of the Heisenberg equation
\begin{equation}
\frac{d}{dt}A_t^H = D_{H_t}A_t^H := \frac{1}{\nu} [H_t,A_t^H . ]_*, \textrm{ with initial condition } A_0^H=Id,
\end{equation}
where $D_{H_t}$ is a smooth family of quasi-inner derivations. We then set 
\begin{equation}
\ham (M,*):=\{A=A_1^H \textrm{ for such } D_{H_t}\}.
\end{equation}
Our first observation is that $\ham(M,*)$ is a normal subgroup of $\aut_0(M,*)$ the group of automorphisms
of the star product deforming $\symp_0(M,\omega)$. The group $\ham(M,*)$ comes with an anti-epimorphism
$\cl : \ham(M,*) \rightarrow \ham(M,\omega)$. 

We prove that $\ham(M,*)$ is the kernel of a morphism defined on $\aut_0(M,*)$.
More precisely, we define a formal version of the flux morphism denoted by $\fl^*$ and we obtain : 
\begin{theoremintro} \label{theor:SES}
There is a short exact sequence of groups
$$1\rightarrow \ham(M,*) \rightarrow \aut_0(M,*) \stackrel{\Fcal}{\rightarrow} \frac{H^1_{dR}(M)[[\nu]]}{\Gamma(M,*)} \rightarrow 1,$$
where $\Fcal(A):=\fl^*(\{A_t\})$ for any smooth path in $\aut_0(M,*)$ joining $A$ to the identity and $\Gamma(M,*):=\fl^*(\pi_1(\aut_0(M,*)))$ where $\pi_1(\aut_0(M,*))$ is the subgroup of $\widetilde{\aut}_0(M,*)$ consisting of classes of smooth loops of automorphisms.
\end{theoremintro}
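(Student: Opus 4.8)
The plan is to transpose Banyaga's argument for the classical Flux homomorphism into the framework of deformation quantization, using the formal flux $\fl^*$ together with its basic properties established before the statement. Recall the infinitesimal picture: the ``Lie algebra'' of $\aut_0(M,*)$ is the space $\der(M,*)$ of $\R[[\nu]]$-linear derivations of $*$, there is a surjective characteristic-class morphism $\mathcal D\mapsto[\mathcal D]\in H^1_{dR}(M)[[\nu]]$ whose kernel is the space of quasi-inner derivations, and for a smooth path $\{A_t\}_{t\in[0,1]}$ with $A_0=\mathrm{Id}$ one has $\fl^*(\{A_t\})=\int_0^1[\dot A_t\circ A_t^{-1}]\,dt$. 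I will use: that the class map is a morphism of Lie algebras into the \emph{abelian} $H^1_{dR}(M)[[\nu]]$, so that brackets of derivations have zero class; that the class is invariant under the conjugation action of the connected group $\aut_0(M,*)$; and, as a consequence, that $\fl^*$ is well defined on homotopy classes of paths and is a group homomorphism on $\widetilde{\aut}_0(M,*)$. Granting this, $\Gamma(M,*)$ is a subgroup of $H^1_{dR}(M)[[\nu]]$ and $\Fcal$ is a well-defined group homomorphism: any two smooth paths from $\mathrm{Id}$ to $A$ differ by a smooth loop, whose flux lies in $\Gamma(M,*)$, and $\fl^*(\{A_t B_t\})=\fl^*(\{A_t\})+\fl^*(\{B_t\})$ by conjugation-invariance of the class.

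\emph{Surjectivity.} Given $c\in H^1_{dR}(M)[[\nu]]$, pick $\mathcal D\in\der(M,*)$ with $[\mathcal D]=c$. The constant family $\mathcal D_t\equiv\mathcal D$ integrates to a smooth path of automorphisms $\{A_t\}$ with $A_0=\mathrm{Id}$, $\dot A_t=\mathcal D\circ A_t$, solved order by order in $\nu$: the $\nu^0$-term is the flow of the symplectic vector field $\mathcal D\bmod\nu$, complete because $M$ is closed, and the higher orders satisfy inhomogeneous linear transport equations along it; each $A_t$ is an automorphism of $*$ because $\mathcal D$ is a derivation. Then $A:=A_1\in\aut_0(M,*)$ and $\Fcal(A)=[\int_0^1 c\,dt]=[c]$.

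\emph{Identification $\Ker\Fcal=\ham(M,*)$.} The inclusion $\ham(M,*)\subseteq\Ker\Fcal$ is immediate: if $A=A_1^H$ solves the Heisenberg equation, then $\dot A_t^H\circ(A_t^H)^{-1}=D_{H_t}=\tfrac1\nu[H_t,\,\cdot\,]_*$ is quasi-inner, hence of zero class, so $\fl^*(\{A_t^H\})=0$. For the reverse inclusion, let $\Fcal(A)=0$ and choose a smooth path $\{A_t\}$ from $\mathrm{Id}$ to $A$; then $\fl^*(\{A_t\})\in\Gamma(M,*)$, so there is a smooth loop $\{\lambda_t\}$ with $\fl^*(\{\lambda_t\})=\fl^*(\{A_t\})$, and replacing $\{A_t\}$ by $\{A_t\circ\lambda_t^{-1}\}$ (still a path from $\mathrm{Id}$ to $A$) we may assume $\fl^*(\{A_t\})=0$, i.e.\ $\int_0^1[\mathcal X_t]\,dt=0$ with $\mathcal X_t:=\dot A_t\circ A_t^{-1}$. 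Now I run a formal version of Banyaga's calibration argument, deforming by a second parameter while keeping \emph{both} endpoints fixed. Put $c(t):=\int_0^t[\mathcal X_s]\,ds$ (so $c(0)=c(1)=0$), fix $\eta\colon[0,1]\to\R$ with $\int_0^1\eta=1$, and choose a smooth two-parameter family of derivations $\mathcal Y^s_t$ with $[\mathcal Y^s_t]=-\eta(s)\,c(t)$ and $\mathcal Y^s_0=\mathcal Y^s_1=0$ (the latter is consistent, since the prescribed class vanishes at $t=0,1$, and can be arranged by subtracting off quasi-inner derivations near the ends). Integrating $\partial_s A^s_t=\mathcal Y^s_t\circ A^s_t$ from $A^0_t:=A_t$ gives a smooth family with $A^s_0\equiv\mathrm{Id}$, $A^s_1\equiv A$; writing $\mathcal X^s_t:=\partial_t A^s_t\circ(A^s_t)^{-1}$, the flatness identity $\partial_s\mathcal X^s_t-\partial_t\mathcal Y^s_t=[\mathcal X^s_t,\mathcal Y^s_t]$ together with the vanishing of the class of a bracket gives $\partial_s[\mathcal X^s_t]=\partial_t[\mathcal Y^s_t]$, whence $[\mathcal X^1_t]=[\mathcal X_t]+\partial_t\!\int_0^1[\mathcal Y^s_t]\,ds=[\mathcal X_t]-\dot c(t)=0$ for all $t$. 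Thus $\{A^1_t\}$ is a smooth path from $\mathrm{Id}$ to $A$ all of whose generators are quasi-inner, i.e.\ $\mathcal X^1_t=\tfrac1\nu[H_t,\,\cdot\,]_*=D_{H_t}$ for a smooth family $H_t\in C^\infty(M)[[\nu]]$, so it solves the Heisenberg equation and $A=A^1_1\in\ham(M,*)$. With surjectivity this yields the short exact sequence, normality of $\ham(M,*)$ being automatic as a kernel.

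\emph{Main obstacle.} The delicate part is the calibration step, and behind it two pieces of infrastructure: the two-parameter formal differential calculus on the infinite-dimensional group $\aut_0(M,*)$ — the flatness identity and the integration of smooth families of derivations to smooth families of automorphisms — and the realization of prescribed classes in $H^1_{dR}(M)[[\nu]]$ by \emph{smooth} families of derivations vanishing where the class vanishes. At the $\nu^0$-level the argument is exactly Banyaga's proof that $\ham(M,\omega)=\Ker\bigl(\symp_0(M,\omega)\to H^1_{dR}(M)/\Gamma(M,\omega)\bigr)$, and the $\nu^k$-corrections are controlled, order by order, by the vanishing of the successive obstruction classes in $H^1_{dR}(M)$; organizing these into the formal statement is where the bookkeeping lies.
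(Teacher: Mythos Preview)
Your argument is correct and follows essentially the same route as the paper: reduce to a path with vanishing $\fl^*$ by absorbing a loop, then run Banyaga's calibration to homotope the path (with fixed endpoints) to one whose generating derivations are quasi-inner at every instant. The only cosmetic difference is in the calibration bookkeeping: the paper first composes with a Hamiltonian path so that the integrated $1$-form $\int_0^1 i(p(D_t))\omega\,dt$ vanishes on the nose (not just in cohomology), which makes the explicit choice $D'_t=-\int_0^t D_u\,du$ vanish at $t=0,1$; you instead build $\mathcal Y^s_t$ by subtracting a quasi-inner tail so that it vanishes at the endpoints---either device achieves the same endpoint-fixing, and the flatness identity you invoke (up to a harmless sign in the bracket) is exactly the paper's Lemma used for homotopy invariance.
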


Next, we observe that $\Gamma(M,*)$ is the image of a morphism
$$\fl^*_{def}:\pi_1(\symp_0(M,\omega)) \rightarrow H^1_{dR}(M)[[\nu]].$$
The values of $\fl^*_{def}$
only depend on the equivalence class of the star product.
We give a condition on the group $\fl^*_{def}(\pi_1(\ham(M,\omega)))$ which ensures that the Lie algebra of $\ham(M,*)$ is the space of quasi-inner derivations. We gives examples of this situation.

In section \ref{sect:Fed}, using the Fedosov construction of star product, we give an explicit expression of the deformed
flux in terms of the characteristic $2$-form parametrizing the star product. This works on nice elements of $\pi_1(\symp_0(M,\omega))$. Consider $*_{\Omega,\nabla}$ a Fedosov's star product obtained with the help of a symplectic connection $\nabla$
and a series of closed $2$-forms $\Omega$, we obtain : 
\begin{theoremintro} \label{fl comp} 
Let $\{\varphi_t\}$ be a loop of symplectomorphisms generated by the symplectic vector field
$X_t$ such that $\varphi_t^*\Omega=\Omega$ and $\varphi_{t*}\nabla=\nabla$.
Then, the deformed flux of $\{\varphi_t\}$ defined with the star product $*_{\Omega,\nabla}$ is
\begin{equation}
\fl_{def}^{*_{\Omega,\nabla}}(\{\varphi_t\})=\int_0^1[i(X_t) \omega]dt - \left[\int_0^1 \varphi^*_t i(X_{t}) \Omega dt\right].
\end{equation}
\end{theoremintro}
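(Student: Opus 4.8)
The plan is to realise the loop $\{\varphi_t\}$ by a canonical loop of automorphisms of $*_{\Omega,\nabla}$ coming from the Fedosov construction, and then to compute the class of its generating derivation. Under the hypotheses, the natural action of $\varphi_t$ on the formal Weyl bundle $\mathcal{W}$ commutes with each piece of the Fedosov derivation $D=-\delta+\nabla+\tfrac1\nu\mathrm{ad}(r)$: commutation with $\delta$ is automatic, commutation with $\nabla$ is exactly $\varphi_{t*}\nabla=\nabla$, and $\varphi_t^*r=r$ follows from $\varphi_t^*\Omega=\Omega$ and $\varphi_{t*}\nabla=\nabla$ by uniqueness of the Fedosov element normalised by $\delta^{-1}r=0$. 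Hence $\varphi_t^*$ preserves the flat sections $W_D$ and, through the symbol isomorphism $W_D\cong C^\infty(M)[[\nu]]$, induces an automorphism $A_t\in\aut_0(M,*_{\Omega,\nabla})$ acting on functions by the genuine pull-back $\varphi_t^*$ (the fibre map $(d\varphi_t)^*$ preserves the $y$-degree, so there are no $\nu$-corrections). Since $\varphi_0=\varphi_1=\mathrm{Id}$, $\{A_t\}$ is a loop, and $\fl_{def}^{*_{\Omega,\nabla}}(\{\varphi_t\})=\fl^*(\{A_t\})=\int_0^1 c(\mathcal{D}_t)\,dt$, where $\mathcal{D}_t$ generates $\{A_t\}$ and $c$ denotes the class of a derivation in $H^1_{dR}(M)[[\nu]]$ (its obstruction to being quasi-inner). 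Differentiating, and using $A_t=\varphi_t^*$ on functions, gives $\mathcal{D}_t=\mathcal{L}_{Z_t}$ where $Z_t$ is the vector field driving the loop (equal to $X_t$, or to $(\varphi_t)_*X_t$, depending on the sign conventions), and $Z_t$ still preserves $(\Omega,\nabla)$, so $\mathcal{L}_{Z_t}$ is a derivation of $*_{\Omega,\nabla}$.

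The heart of the argument is then the identity $c(\mathcal{L}_Y)=[\,i_Y\omega-i_Y\Omega\,]$ in $H^1_{dR}(M)[[\nu]]$ for any symplectic vector field $Y$ preserving $(\Omega,\nabla)$. I would prove it on the Weyl bundle: lift $\mathcal{L}_Y$ to the Lie derivative $\mathcal{L}^{\mathcal{W}}_Y$, which commutes with $D$, and use the Cartan-type identity $i_YD+Di_Y=-\mathcal{L}^\delta_Y+\mathcal{L}^\nabla_Y+\tfrac1\nu\mathrm{ad}(i_Yr)$, together with $\mathcal{L}^\delta_Y=\tfrac1\nu\mathrm{ad}(\widehat{i_Y\omega})$ (where $\widehat{\beta}:=\beta_ky^k$ for a $1$-form $\beta$) and $\mathcal{L}^\nabla_Y-\mathcal{L}^{\mathcal{W}}_Y=\tfrac1\nu\mathrm{ad}(q_Y)$ for a quadratic $q_Y$ built from $\nabla Y$ — the last point because $\nabla Y$ is a section of $\mathfrak{sp}(TM)$ when $Y$ is symplectic. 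As $i_YD+Di_Y$ annihilates $0$-forms, hence $W_D$, one obtains $\mathcal{L}^{\mathcal{W}}_Y|_{W_D}=\tfrac1\nu\mathrm{ad}(\Lambda_Y)|_{W_D}$ with $\Lambda_Y=\widehat{i_Y\omega}-q_Y-i_Yr$, and then $D\Lambda_Y$ is forced to be a closed scalar $1$-form $\mu_Y$. Writing $\mu_Y=d\phi_U$ locally, $\Lambda_Y-\phi_U$ is $D$-flat over $U$, so $\mathcal{L}_Y$ is quasi-inner over $U$ with potential $\sigma(\Lambda_Y)-\phi_U$; since $\sigma(r)=0$ gives $\sigma(\Lambda_Y)=0$, the global closed $1$-form representing $c(\mathcal{L}_Y)$ is $d\sigma(\Lambda_Y)-\mu_Y=-\mu_Y$. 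It remains to evaluate $\mu_Y=D\Lambda_Y$: the operator $\delta$ turns the linear-in-$y$ part $\widehat{i_Y\omega}$ of $\Lambda_Y$ into the scalar $1$-form $i_Y\omega$, and turns the linear-in-$y$ part of $-i_Yr$ — governed by the contribution $-\delta^{-1}\Omega$ of $\Omega$ to the Fedosov element — into $i_Y\Omega$, while a degree-by-degree check shows the other terms (the cross-terms of $\nabla\widehat{i_Y\omega}$ with $\delta q_Y$, the curvature part of $r$, and the higher pieces) contribute only exact forms. This gives $[\mu_Y]=-[\,i_Y\omega-i_Y\Omega\,]$, hence $c(\mathcal{L}_Y)=[\,i_Y\omega-i_Y\Omega\,]$, the signs being fixed by the conventions of $\fl^*$.

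Substituting $Y=Z_t$ and integrating, $\fl_{def}^{*_{\Omega,\nabla}}(\{\varphi_t\})=\int_0^1[\,i_{Z_t}\omega-i_{Z_t}\Omega\,]\,dt$. Because $\varphi_t$ is homotopic to the identity and $\varphi_t^*\omega=\omega$, $\varphi_t^*\Omega=\Omega$, one has $[i_{Z_t}\omega]=[i(X_t)\omega]$ and $[i_{Z_t}\Omega]=[\varphi_t^*i(X_t)\Omega]$ in $H^1_{dR}(M)[[\nu]]$, and since $t\mapsto\varphi_t^*i(X_t)\Omega$ is a smooth family of closed forms, $\int_0^1[\varphi_t^*i(X_t)\Omega]\,dt=[\int_0^1\varphi_t^*i(X_t)\Omega\,dt]$. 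Collecting the two terms gives the announced formula.

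The main obstacle is the identification $[\mu_Y]=-[\,i_Y\omega-i_Y\Omega\,]$ in the second paragraph: it requires running Fedosov's recursion for $r$ with care — pinning down exactly how $\Omega$ enters the linear-in-$y$ part of $r$, including quadratic-in-$\Omega$ corrections — and a patient degree count showing that everything in $\mu_Y$ other than $i_Y\omega$ and $i_Y\Omega$ is exact. Should that get unwieldy, a cleaner route is to first establish $c(\mathcal{L}_Y)=[i_Y\omega]$ for the reference product $*_{0,\nabla}$ and then compute $\tfrac{d}{ds}\,c_{*_{s\Omega,\nabla}}(\mathcal{L}_Y)=-[i_Y\Omega]$ by differentiating the Fedosov construction in the parameter $s$ and integrating from $0$ to $1$.
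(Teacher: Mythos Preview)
Your overall strategy is sound and close in spirit to the paper's, but organized differently. The paper follows the definition of $\fl_{def}^*$ literally: it takes the canonical lift $B_t$ of Proposition~\ref{prop:fldef} (which is \emph{not} a loop), solves for $B_1=\exp(D)$ explicitly in the Weyl bundle via equation~(\ref{eq:authamfedosov}), and then identifies $[i(p(D))\omega]$ by applying $\mathcal{D}$ to the integrated Weyl-bundle generator $\int_0^1\varphi_{s*}((QH_s^{\mathcal{U}})^{\geq 3}+i(X_s)r)\,ds$. You instead exploit the hypotheses up front to produce a loop $A_t=\varphi_t^*$ of automorphisms and compute the class of its generating derivation $\mathcal{L}_{Z_t}$ pointwise in $t$. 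This is a legitimate and somewhat cleaner reorganization: since $q$ is an isomorphism with inverse $\cl_*$, any loop lift of $\{\varphi_t\}$ computes $\fl_{def}^*$, and you avoid the time-integration inside the Weyl bundle.

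Where you go astray is in your assessment of the ``main obstacle''. You do \emph{not} need to run Fedosov's recursion for $r$, track quadratic-in-$\Omega$ corrections, or perform a degree-by-degree argument that stray terms are merely exact. The computation is exact at the level of forms. With the sign on $q_Y$ fixed to match the Cartan formula~(\ref{eq:lieder}) (so $\Lambda_Y=\widehat{i_Y\omega}+q_Y-i_Yr$), one expands $\mathcal{D}\Lambda_Y$ term by term using only the defining identity $\delta r=\overline{R}+\partial r+\tfrac{1}{2\nu}[r,r]-\Omega$ to handle $\mathcal{D}(i_Yr)$, and obtains
\[
\mathcal{D}\Lambda_Y \;=\; -\,i_Y\omega \;+\; i_Y\Omega \;-\; \tfrac12(\mathcal{L}_Y\nabla)_{ijk}\,y^jy^k\,dx^i .
\]
The last term is precisely where $\varphi_{t*}\nabla=\nabla$ (hence $\mathcal{L}_Y\nabla=0$) is used: it kills that term \emph{identically}, not up to an exact form, and without this hypothesis $\mathcal{D}\Lambda_Y$ is not even central, so your earlier claim that ``$D\Lambda_Y$ is forced to be a closed scalar $1$-form'' would fail. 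This is exactly the calculation the paper performs in the displayed equations around~(\ref{eq:fedD1})--(\ref{eq:fedD3}); once you see it, you get $\mu_Y=-i_Y\omega+i_Y\Omega$ on the nose and $c(\mathcal{L}_Y)=[i_Y\omega-i_Y\Omega]$ with no further work. Your fallback route via $\tfrac{d}{ds}\,c_{*_{s\Omega,\nabla}}$ is therefore unnecessary.
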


\subsection*{Acknowledgement}
The results in this paper come form my Ph.D. Thesis \cite{LLF} and I warmly thank my advisors Simone Gutt, Fr\'ed\'eric Bourgeois and Michel Cahen. This work benefited from a grant from the Fond National de la Recherche Scientifique in Belgium.
It was also supported by an Action de Recherche Concentr\'ee of the Communaut\'e Fran\c caise de Belgique and the Interuniversity Attraction Pole ``Dynamics, Geometry and Statistical Physics" network.


\section{Star products, derivations and automorphisms} \label{sect:Recall}

In this section, we recall the definitions and basic properties of star products on symplectic manifolds that we need
in this paper.

Let $(M,\omega)$ be a symplectic manifold. The space $C^{\infty}(M)$ of real valued smooth functions
is naturally endowed with a Poisson bracket : 
$$\{F,G\}=-\omega(X_F,X_G),\ \forall F,G \in C^{\infty}(M) $$
where $X_F$ is a Hamiltonian vector field on $M$, that is $i(X_F)\omega:=dF$. 

A {\bf star product} on $(M,\omega)$ is a $\R[[\nu]]$-bilinear associative law on the space $C^{\infty}(M)[[\nu]]$ of formal series
of smooth functions : 
\begin{equation} \nonumber 
*: (C^{\infty}(M)[[\nu]] )^2 \rightarrow C^{\infty}(M)[[\nu]]: (H,K)\mapsto H*K:=\sum_{r=0}^{\infty} \nu^r C_r(H,K) 
\end{equation}
where the $C_r$'s are bidifferential operators null on constants such that for all $H,\, K \in C^{\infty}(M)[[\nu]]$ : 
$C_0(H,K)=HK$ and
$C_1(H,K)-C_1(K,H)=\{H,K\}$.
It is a result of De Wilde-Lecomte \cite{DWL}, Fedosov \cite{fed2} and Omori-Yoshioka-Maeda \cite{OMY} that all symplectic manifolds admit a star product.
Two star products $*$ and $*'$ are {\bf equivalent} if there exists a formal power series $T$ of $\R[[\nu]]$-linear differential operators
$$T=Id+\sum_{r=1}^{\infty} \nu^rT_r: C^{\infty}(M)[[\nu]] \rightarrow C^{\infty}(M)[[\nu]]$$
such that for all $F,G \in C^{\infty}(M)[[\nu]]$, we have : 
$T(F*G)=TF *' TG.$ Star products on symplectic manifolds are classified up to equivalence by  $H^2_{dR}(M)[[\nu]]$, see for example \cite{BCG}.

\subsection{Derivations} \label{subsect:deriv}

At the classical level, a derivation of the Poisson algebra of a symplectic manifold is a symplectic vector field, that is a vector field $X$ on $M$ such that $\Lr_X\omega=0$.

Now, we fix a star product $*$ on the symplectic manifold $(M,\omega)$. 
A {\bf  derivation} of the star product is a $\R[[\nu]]$-linear map $D:C^{\infty}(M)[[\nu]] \mapsto C^{\infty}(M)[[\nu]]$, such that
\begin{equation*}
D(F*G)=DF*G+F*DG.
\end{equation*}
We denote by $\der(M,*)$ the {\bf space of derivations}. It is a Lie algebra for the commutator of derivations.

 A  derivation is called {\bf quasi-inner} if it is of the form
$D_H(F):= \frac{1}{\nu}[H,F]_*$, for all $F\in C^{\infty}(M)[[\nu]],$
for some $H\in C^{\infty}(M)[[\nu]]$. We denote by $\qInn(M,*)$ the {\bf space of quasi-inner derivations}. It is an ideal of $\der(M,*)$.

Derivations of the star product are in bijection with formal series of symplectic vector fields on $M$, the last space will be denoted by $\Symp(M,\omega)[[\nu]]$: 
$$p:\der(M,*)\rightarrow \Symp(M,\omega)[[\nu]]: D \mapsto p(D),$$
such that on a contractible open set $U$ : 
$$ i(p(D))\omega\vert_U = dH_U \textrm{ with } DF\vert_U=\frac{1}{\nu}[H_U,F]_*.$$
for some $H_U \in C^{\infty}(U)[[\nu]]$.

\begin{prop} \label{prop:lienderaut}
Let $(M,\omega)$ be a symplectic manifold endowed with a star product $*$. Then,
\begin{enumerate}
\item $p(D)=D_0+ \nu (\ldots)$ where $D=\sum_{i=0}^{\infty}\nu^i D_i\in \der(M,*)$.
Moreover, $D=D_F\in \qInn(M,*)$ for $F\in C^{\infty}(M)[[\nu]]$ if and only if $p(D)=X_F$ with
$i(X_F)\omega=dF$ formally in $\nu$.
\item Let $D,D'\in \der(M,*)$, then $[D,D']\in \qInn(M,*)$.
\item If $D_F$ and $D_G$ are quasi-inner derivations, we have $[D_F,D_G]=D_{\frac{1}{\nu}[F,G]_*}$.
\end{enumerate}
\end{prop}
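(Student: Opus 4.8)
The plan is to treat the three statements in sequence, leaning on the local description of the map $p$ and the standard properties of the star-commutator. For part (1), I would start from the fact that $[H,F]_* = \nu\{H,F\} + \nu^2(\ldots)$, which is immediate from the axioms $C_0(H,F)=HF$ and $C_1(H,F)-C_1(F,H)=\{H,F\}$. Hence $D_H F = \{H,F\} + \nu(\ldots)$, so on a contractible $U$ the symplectic vector field $p(D)$ has leading term the Hamiltonian vector field of the leading term of $H_U$; since $p(D)$ is a globally defined formal series of symplectic vector fields, its zero-order part is an honest symplectic vector field $D_0'$, matching the leading term of $D$ up to the sign/identification conventions in the definition of $p$. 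For the ``moreover'' clause: if $D=D_F$ is quasi-inner with global potential $F$, then one may take $H_U = F|_U$ on every $U$, so $i(p(D))\omega|_U = dF|_U$ for all $U$, which globalizes to $i(p(D))\omega = dF$, i.e. $p(D)=X_F$ formally in $\nu$. Conversely, if $p(D)=X_F$ globally, then on each contractible $U$ we have $dH_U = i(X_F)\omega|_U = dF|_U$, so $H_U - F|_U$ is a formal constant $c_U$; then $D F|_U = \tfrac1\nu[H_U,F]_* = \tfrac1\nu[F+c_U,F]_* = \tfrac1\nu[F,F]_*$ on $U$ (constants are central), and these local expressions patch to give $D = D_F$ globally. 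The only subtlety here is bookkeeping the additive constants, but they are killed by the bracket.

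For part (2), the key point is that $p$ is a Lie algebra morphism onto $\Symp(M,\omega)[[\nu]]$ and that $[D,D']$, being a derivation, is determined by its image under $p$ together with local potentials. More concretely: on a contractible $U$, write $DF|_U = \tfrac1\nu[H_U,F]_*$ and $D'F|_U = \tfrac1\nu[H'_U,F]_*$; then by the Jacobi identity for $[\cdot,\cdot]_*$ one computes $[D,D']F|_U = \tfrac1\nu\big[\tfrac1\nu[H_U,H'_U]_*,\,F\big]_*$, so $[D,D']$ is locally quasi-inner with potential $\tfrac1\nu[H_U,H'_U]_*$. The obstruction to $[D,D']$ being \emph{globally} quasi-inner is the discrepancy of these local potentials on overlaps: on $U\cap V$ the difference $\tfrac1\nu[H_U,H'_U]_* - \tfrac1\nu[H_V,H'_V]_*$ must be a formal constant. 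One checks this by noting $H_U - H_V$ and $H'_U - H'_V$ are formal constants on $U\cap V$ (since they have the same differential, as both represent $p(D)$, resp. $p(D')$, there), and expanding the bracket bilinearly: the cross terms involving a constant factor reduce, modulo constants, to something proportional to $\{H_U,H'_U\} - \{H_V,H'_V\}$ at each order, which vanishes since a constant has zero Hamiltonian vector field. Hence the local potentials agree up to formal constants, they can be adjusted to glue into a global $K\in C^\infty(M)[[\nu]]$, and $[D,D']=D_K$.

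Part (3) is then a direct computation: for quasi-inner $D_F, D_G$ with global potentials, the Jacobi identity for $\tfrac1\nu[\cdot,\cdot]_*$ gives, for all $F',$
$$[D_F,D_G](F') = \tfrac1{\nu^2}\big[F,[G,F']_*\big]_* - \tfrac1{\nu^2}\big[G,[F,F']_*\big]_* = \tfrac1{\nu^2}\big[[F,G]_*,F'\big]_* = D_{\frac1\nu[F,G]_*}(F'),$$
where I have used bilinearity and associativity of $*$ to rearrange the double commutators. I expect the main obstacle to be part (2): making the gluing of the local potentials rigorous requires being careful that the constants can be chosen coherently over a good cover (a \v{C}ech argument, trivial since we only need a $0$-cochain whose coboundary is a prescribed constant-valued $1$-cocycle that is automatically exact as $H^1$ of constants on a good cover with connected intersections vanishes in the relevant sense), whereas parts (1) and (3) are essentially formal manipulations with the definition of $p$ and the star-commutator.
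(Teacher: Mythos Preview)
The paper does not include a proof of this proposition; it is stated without argument as a standard fact about derivations of star products. Your proof is correct and follows the natural line of reasoning one would expect.

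One simplification is available in your treatment of part (2). Since the bidifferential operators $C_r$ are null on constants, formal constants are \emph{central} for $*$, so $[c,H'_V]_* = 0$ identically for any formal constant $c$. Expanding $\tfrac{1}{\nu}[H_U,H'_U]_* - \tfrac{1}{\nu}[H_V,H'_V]_*$ on $U\cap V$ with $H_U = H_V + c$ and $H'_U = H'_V + c'$ therefore gives zero on the nose, not merely a constant. The local potentials $\tfrac{1}{\nu}[H_U,H'_U]_*$ thus agree \emph{exactly} on overlaps and glue directly to a global $K \in C^{\infty}(M)[[\nu]]$; the \v{C}ech adjustment you anticipate is unnecessary, as the putative $1$-cocycle of constants is already identically zero.
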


\subsection{Automorphisms}

From now on, we will assume that the symplectic manifold $(M,\omega)$ is closed. At the classical level, smooth one parameter families of symplectic vector fields integrate to families of symplectomorphisms. Recall that a symplectomorphism is a diffeomorphism $\phi:M\rightarrow M$ such that $\phi^*\omega=\omega$.

We indicate below how to ``exponentiate'' a family of derivations into a family of automorphisms. This is analogous
to what have been done in \cite{fed} and \cite{wa}. 

An {\bf automorphism} of the star product is a $\R[[\nu]]$-linear bijection $A:C^{\infty}(M)[[\nu]] \mapsto C^{\infty}(M)[[\nu]]$, such that
\begin{equation*}
A(F*G)=AF*AG.
\end{equation*}
The group $\aut(M,*)$ of all automorphisms of the star product projects onto the group of symplectomorphisms, denoted by $\symp(M,\omega)$. Indeed, if $A=\sum_{r=0}^{\infty} \nu^r A_r\in \aut(M,*)$, then $A_0:=\varphi^*$ for some $\varphi \in \symp(M,\omega)$.
Hence, the map {\bf classical limit} defined by  
$$\cl : \aut(M,*) \rightarrow \symp(M,\omega):A\mapsto \varphi$$
is an anti-homomorphism of group. If $A\in \cl^{-1}(Id)$, then there exists $D \in \nu \der(M,*)$ such that $A=\exp(D)$. 

\begin{defi} \label{def:aut0}
The {\bf subgroup $\aut_0(M,*)$} of $\aut(M,*)$, is defined to be $\cl^{-1}(\symp_0(M,\omega))$, where $\symp_0(M,\omega)$ is the identity component (for the compact-open $C^{\infty}$-topology) of the group of symplectic diffeomorphisms.
\end{defi}

\begin{defi} \label{smoothder}
Let $I$ be an interval in $\R$.
A one-parameter family of derivations $D_t=\sum_{r=0}^{\infty}\nu^r D_{r,t} \in \der_0(M,*)$ for $t\in I$
is called {\bf smooth} if for all $F\in C^{\infty}(M)$ we have $D_t(F)\in C^{\infty}(I \times M)[[\nu]]$,
\end{defi}

\begin{rem}
Using the bijection $p:\der(M,*)\rightarrow \Symp(M,\omega)[[\nu]]$ defined in the above Subsection \ref{subsect:deriv}, one sees that a one-parameter family of derivations $D_t$ is smooth if the coefficients of $p(D_t)$ are smooth one parameter families of symplectic vector fields, where $\Symp(M,\omega)$ is endowed with the compact-open $C^{\infty}$-topology.
\end{rem}

\begin{prop} \label{flow} 
Let $D_t=\sum_{r=0}^{\infty} \nu^rD_{r,t}$ be a smooth one-parameter family of derivations.
Then there exists a unique family of automorphisms $t\mapsto A_t $ defined for all $t$ such that
\begin{equation} \label{eq:Heis} 
\frac{d}{dt} A_t H= D_tA_t H,\ \forall H \in C^{\infty}(M)[[\nu]],
\end{equation}
with the initial condition $A_0H=H$ for all $H\in C^{\infty}(M)[[\nu]] $.

Moreover, if the derivation D does not depend on the time $t$, then the family $A_t$ is a one-parameter subgroup of automorphisms and $D\circ A_t=A_t \circ D$.

Finally, when $D_t \in \nu \der_0(M,*)$, then the solution $A_t$ of the equation (\ref{eq:Heis}) 
is the formal exponential $A_t=\exp(\int_0^t D_s ds)$, the integral $\int_0^t D_s ds$ is the derivation $F\mapsto \int_0^t D_s(F) ds$.
\end{prop}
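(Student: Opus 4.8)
The plan is to solve the Heisenberg equation~\eqref{eq:Heis} order by order in $\nu$, and then to transport every structural property (being an automorphism, the group law, the commutation relation, the exponential form) through the uniqueness of the ODEs that appear. Write $A_t=\sum_{r\ge 0}\nu^r A_{r,t}$ and $D_t=\sum_{r\ge 0}\nu^r D_{r,t}$. By Proposition~\ref{prop:lienderaut}(1) each $D_{0,t}$ is the action on functions of a symplectic vector field, and smoothness of the family means $(t,x)\mapsto D_{r,t}(F)(x)$ is smooth for every $F\in C^\infty(M)$ and every $r$. Comparing coefficients of $\nu^r$ in~\eqref{eq:Heis} gives, for each $r$, the equation
\begin{equation*}
\tfrac{d}{dt}A_{r,t}=D_{0,t}\circ A_{r,t}+B_{r,t},\qquad A_{0,0}=\mathrm{Id},\quad A_{r,0}=0\ \text{for }r\ge 1,
\end{equation*}
where $B_{r,t}:=\sum_{i=1}^{r}D_{i,t}\circ A_{r-i,t}$ depends only on the lower-order unknowns $A_{0,t},\dots,A_{r-1,t}$, so the system can be solved recursively.

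At order $0$ this is the transport equation of the time-dependent symplectic vector field $D_{0,t}$; since $M$ is closed this vector field is complete, its flow $t\mapsto\varphi_t$ exists and is smooth for all $t$, consists of symplectomorphisms, and lies in $\symp_0(M,\omega)$ because the flow itself is a path from $\mathrm{id}$ to $\varphi_t$ inside $\symp(M,\omega)$. This realises $A_{0,t}$ as pullback by $\varphi_t$, hence invertible with smooth inverse satisfying $\tfrac{d}{dt}A_{0,t}^{-1}=-A_{0,t}^{-1}\circ D_{0,t}$. For $r\ge 1$ the equation is linear inhomogeneous with known, smooth inhomogeneity $B_{r,t}$, and variation of constants gives the unique solution $A_{r,t}=A_{0,t}\circ\int_0^t A_{0,s}^{-1}\circ B_{r,s}\,ds$, defined for all $t$ and mapping $C^\infty(M)$ to $C^\infty(M)$ smoothly in $t$. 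By induction this produces a unique family of $\R[[\nu]]$-linear operators $A_t$, defined for all $t$; it is bijective because its leading term $A_{0,t}$ is (invert the formal series order by order). To see that $A_t$ is an automorphism, set $P_t(F,G):=A_t(F*G)-A_tF*A_tG$; using~\eqref{eq:Heis} and the fact that $D_t$ is a derivation of $*$ one gets $\tfrac{d}{dt}P_t(F,G)=D_t\big(P_t(F,G)\big)$ with $P_0(F,G)=0$, and the same order-by-order uniqueness (at order $0$ this is uniqueness for a transport equation) forces $P_t\equiv 0$.

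Suppose now $D_t\equiv D$ is independent of $t$. Fixing $s$, the families $t\mapsto A_{t+s}$ and $t\mapsto A_t\circ A_s$ both solve $\tfrac{d}{dt}X_t=D\circ X_t$ with the same initial value $X_0=A_s$, hence coincide; this is the one-parameter subgroup property. Likewise $t\mapsto D\circ A_t$ and $t\mapsto A_t\circ D$ both solve $\tfrac{d}{dt}X_t=D\circ X_t$ with $X_0=D$, so $D\circ A_t=A_t\circ D$. Finally, assume $D_t\in\nu\der_0(M,*)$. Then the order-$0$ equation gives $A_{0,t}=\mathrm{Id}$, so $A_t\in\cl^{-1}(\mathrm{Id})$; moreover $E_t:=\int_0^t D_s\,ds$ lies in $\nu\der_0(M,*)$, so $\exp(E_t):=\sum_{k\ge0}\tfrac{1}{k!}E_t^{\,k}$ converges $\nu$-adically and is a well-defined automorphism. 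One then checks that $t\mapsto\exp(E_t)$ satisfies~\eqref{eq:Heis} with $\exp(E_0)=\mathrm{Id}$, and the uniqueness already established identifies it with $A_t$.

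The step I expect to demand the most care is the order-$0$ analysis: it is the only place where genuine analysis (rather than formal manipulation) enters, and completeness of the flow — hence the fact that $A_t$ is defined for all $t$ — rests precisely on $M$ being closed; from order $1$ on everything is linear and existence and uniqueness are purely formal. A secondary point deserving attention is the last assertion: verifying $\tfrac{d}{dt}\exp(E_t)=D_t\circ\exp(E_t)$ uses that $D_t$ commutes with $E_t=\int_0^t D_s\,ds$, which is automatic when $D$ is time-independent; for a general smooth family the identity is best read off from the order-by-order construction together with uniqueness rather than from a bare differentiation of the exponential series, so it is prudent to phrase this part as a uniqueness argument.
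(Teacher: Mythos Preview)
Your approach coincides with the paper's: reduce to the order-zero flow, solve the higher orders by a recursive integral formula (the paper writes $\varphi_t^*H(t) = H + \int_0^t \varphi_s^* \tilde D_s H(s)\,ds$, which is your variation-of-constants in different bookkeeping), and derive the automorphism property, the one-parameter subgroup law, and the commutation $D\circ A_t=A_t\circ D$ all from uniqueness. Two points need adjustment.

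First, the identification of $A_{0,t}$ as pullback by the flow of $D_{0,t}$ is not quite right. If $\varphi_t$ is the flow of $X_t:=D_{0,t}$, then $\tfrac{d}{dt}\varphi_t^*H=\varphi_t^*(X_tH)$, not $X_t(\varphi_t^*H)$; these agree only when $(\varphi_t)_*X_t=X_t$. The paper instead takes $\phi_t$ to be the flow of $-D_{0,t}$ and obtains $A_{0,t}=(\phi_t^{-1})^*$, remarking explicitly (immediately after the proof) that $\phi_t^{-1}$ is in general \emph{not} the flow of $D_{0,t}$. This is only a labelling slip on your side: your recursion uses nothing beyond invertibility of $A_{0,t}$ and the identity $\tfrac{d}{dt}A_{0,t}^{-1}=-A_{0,t}^{-1}\circ D_{0,t}$, both of which survive.

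Second, your worry about the final assertion is well founded, but the rescue you propose does not work. Differentiating $\exp(E_t)$ with $E_t=\int_0^tD_s\,ds$ yields $D_t\exp(E_t)$ only when $[D_t,E_t]=0$; for a generic smooth family $D_t\in\nu\der_0(M,*)$ this fails, the solution of \eqref{eq:Heis} is the time-ordered exponential, and uniqueness cannot force $A_t=\exp(E_t)$ when the latter simply does not satisfy the equation. The paper's own proof of this clause is the single phrase ``checked by differentiating'', so the statement should be read for autonomous $D$ (or families with $[D_s,D_t]=0$ for all $s,t$), which is the only case invoked later in the paper.
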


\begin{proof}
Let $H \in C^{\infty}(M)[[\nu]]$, we will show that there exists a unique family $H(t)\in C^{\infty}(M)[[\nu]] $ such that 
\begin{equation} \label{eqheis} 
\frac{d}{dt}H(t)= D_t H(t)
\end{equation}
and $H(0)=H$.

Let $\varphi_t$ be the one-parameter family of symplectomorphisms generated by $-D_{t,0}$, the opposite of the zeroth order term of $D_t$. 
It means $\frac{d}{dt}\varphi_t=-\varphi_t^*D_{t,0}$ and $\varphi_0=Id$.
Then, if $H(t)$ satisfies (\ref{eqheis}), we have
\begin{equation}
\frac{d}{dt}\varphi_t^*H(t)  =  -\varphi_t^*D_{t,0}H(t) + \varphi_t^*D_tH(t)  =  \varphi_t^* \tilde{D}_tH(t).
\end{equation}
where $\tilde{D}_t:= D_t-D_{t,0}$. After integration with respect to $t$, we get
\begin{equation} \label{eqrecu} 
\varphi_t^*H(t) = H + \int_0^t \varphi_s^* \tilde{D}_sH(s)ds.
\end{equation}
Now the equation (\ref{eqrecu}) can be solved by induction on the degree in $\nu$ and the solution is unique.
Set $A_t: C^{\infty}(M)[[\nu]] \rightarrow C^{\infty}(M)[[\nu]] : H \mapsto H(t)$.

It remains to show that $A_t$ is an automorphism of star product. For this, consider the two expressions
$A_tH * A_tK$ and $A_t(H*K)$. They are equal for $t=0$ and are both solutions of equation (\ref{eqheis})
for all $t$.
Then, by uniqueness of the solutions of the equation (\ref{eqheis}), we have $A_tH*A_tK=A_t(H*K).$

The fact that $A_t$ is a one-parameter subgroup when the derivation is autonomous is again a consequence of the uniqueness of $A_t$ (see \cite{wa}). 

The last statement is checked by differentiating $\exp(\int_0^t D_s ds)H$ for $H\in C^{\infty}(M)[[\nu]]$.
\end{proof}

\begin{rem} \label{rem:-Dt0}
The solution $A_t$ of the equation (\ref{eq:Heis}) starts at order $0$ in $\nu$ by $(\varphi_t^{-1})^*$,
where $\varphi_t$ is the flow of $-D_{t,0}$. In general, $\varphi_t^{-1}$ is NOT the flow of $D_{t,0}$.
\end{rem}

\begin{defi} \label{def:smoothaut}
Let $I$ be an interval in $\R$.
A one-parameter family of automorphisms $A_t=\varphi_t^* + \sum_{r=1}^{\infty}\nu^r A_{r,t}$ for $t\in I$ is called smooth if 
for all $F\in C^{\infty}(M)$ we have $A_t(F)\in C^{\infty}(I \times M)[[\nu]]$,
\end{defi}

\begin{rem}
Using the Weinstein tubular neighbourhood, one defines a chart $W:U\subset \symp_0(M,\omega) \mapsto V \subset \Symp(M,\omega)$ from a neighbourhood $U$ of $Id$ in $\symp_0(M,\omega)$ and a neighbourhood $V$ of $0$ in $\Symp(M,\omega)$. Together with Proposition \ref{flow}, one sees that $\cl^{-1}(U)$ is in bijection with $W(U)\times \nu\der(M,*)$ or equivalently with $W(U)+\nu\Symp(M,\Omega)[[\nu]]$. A one-parameter family of automorphisms $A_t$ in $\cl^{-1}(U)$ is smooth if the coefficients of its image in $W(U)+\nu\Symp(M,\Omega)[[\nu]]$ are smooth one-parameter families of symplectic vector fields.
\end{rem}

\begin{cor}
If $D_t$ is a smooth one parameter family of derivation, then the solution $A_t$ of Equation (\ref{eq:Heis}) is smooth.
\end{cor}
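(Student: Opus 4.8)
The plan is to reuse the induction on the $\nu$-degree that already appears in the proof of Proposition~\ref{flow}, checking at each stage that smoothness is preserved jointly in $(t,x)$. Fix $F\in C^\infty(M)$, write $A_tF=:H(t)=\sum_{r\geq 0}\nu^r H_r(t)$, and let $\varphi_t$ be the flow of the time-dependent vector field $-D_{t,0}$. Equation~(\ref{eqrecu}) reads $\varphi_t^*H(t)=F+\int_0^t\varphi_s^*\tilde D_sH(s)\,ds$ with $\tilde D_s=D_s-D_{s,0}$, and since $\tilde D_s$ carries no term of order $0$ in $\nu$, extracting the coefficient of $\nu^n$ gives
\[
H_n(t)=(\varphi_t^{-1})^*\Bigl(\delta_{n,0}\,F+\int_0^t\varphi_s^*\Bigl(\sum_{r=1}^n D_{r,s}H_{n-r}(s)\Bigr)\,ds\Bigr),
\]
whose right-hand side, for index $n$, involves only $H_0(s),\dots,H_{n-1}(s)$. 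It therefore suffices to propagate joint smoothness of the $H_r$'s through this formula.

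First I would record the base case together with the main geometric input: the maps $(t,x)\mapsto\varphi_t(x)$ and $(t,x)\mapsto\varphi_t^{-1}(x)$ are $C^\infty$ on $I\times M$. This is the classical theorem on smooth dependence of solutions of an ODE on time and on initial conditions, applied to the vector field $-D_{t,0}$, whose local coefficients are smooth functions of $(t,x)$ because $D_t$ is a smooth family of derivations; equivalently, the coefficients of $p(D_t)\in\Symp(M,\omega)[[\nu]]$ form smooth one-parameter families of symplectic vector fields, cf.\ Definition~\ref{smoothder} and the remark following it. In particular $H_0(t)=(\varphi_t^{-1})^*F\in C^\infty(I\times M)$, which is the case $n=0$.

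For the inductive step I would assume $H_0,\dots,H_{n-1}\in C^\infty(I\times M)$ and run the displayed formula through three elementary stability facts: (i) if $g\in C^\infty(I\times M)$ and $D_{r,s}$ is a smooth family of differential operators, then $(s,x)\mapsto\bigl(D_{r,s}\,g(s,\cdot)\bigr)(x)$ lies in $C^\infty(I\times M)$; (ii) composition with the smooth family of diffeomorphisms $\varphi_s$ preserves $C^\infty(I\times M)$; (iii) if $h\in C^\infty(I\times M)$ then $(t,x)\mapsto\int_0^t h(s,x)\,ds$ lies in $C^\infty(I\times M)$, by differentiation under the integral sign in the $x$-variables and the fundamental theorem of calculus in $t$. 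Applying (i) and (ii) to each term $D_{r,s}H_{n-r}(s)$, then (iii), then (ii) once more through $(\varphi_t^{-1})^*$, yields $H_n\in C^\infty(I\times M)$; this closes the induction and shows $A_tF=H(t)\in C^\infty(I\times M)[[\nu]]$, with order-zero term the pullback by $\varphi_t^{-1}$ as in Remark~\ref{rem:-Dt0}, i.e.\ $A_t$ is smooth in the sense of Definition~\ref{def:smoothaut}. The only point requiring genuine care --- and hence the main obstacle, although it is entirely standard --- is fact (i) together with the flow smoothness used in the base case; once these are granted the remainder is pure bookkeeping.
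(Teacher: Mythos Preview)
Your proof is correct and is precisely the argument the paper leaves implicit: the corollary is stated without proof in the paper, the intended justification being that the recursive formula~(\ref{eqrecu}) from the proof of Proposition~\ref{flow} manifestly produces each $H_n(t)$ from smooth ingredients. Your write-up makes this explicit, and your identification of the only nontrivial inputs --- smooth dependence of the flow $\varphi_t$ on $(t,x)$, and the fact that a smooth family of differential operators applied to a function smooth in $(s,x)$ stays smooth --- is accurate.
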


Given $D_t$ a smooth family of derivations, we say that the solution path $A_t$ of the equation (\ref{eq:Heis}) is {\bf generated} by $D_t$. One has : 

\begin{prop}[Computation rules] \label{prop:rules} 
Let $A_t$ and $A'_t$ be smooth paths of automorphisms generated by $D_t$ and $D'_t \in \der(M,*)$.
Then,
\begin{enumerate}
\item the path $A_tA'_t$ is generated by the derivation $D_t+A_tD'_tA_t^{-1}$,
\item the path $A_t^{-1}$ is generated by the derivation $-A_t^{-1}D_tA_t$,
\item we have $A_tD'_tA_t^{-1}=D'_t + D_{F_t}$ for a family $F_t \in C^{\infty}(M)[[\nu]]$.
\item Let $D,D' \in \nu\der(M,*)$, then $\exp(D)\exp(D')=\exp(D+D'+D_F)$ for some $F\in C^{\infty}(M)[[\nu]]$.
\end{enumerate}
\end{prop}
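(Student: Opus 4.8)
The plan is to obtain (1) and (2) by differentiating the relevant paths of automorphisms and invoking the uniqueness assertion of Proposition~\ref{flow}, to obtain (3) by an integration argument resting on Proposition~\ref{prop:lienderaut}(2), and to obtain (4) by a Baker--Campbell--Hausdorff computation. For (1), fix $H\in C^{\infty}(M)[[\nu]]$ and differentiate, using the operator identities $\frac{d}{dt}A_t=D_tA_t$ and $\frac{d}{dt}A'_t=D'_tA'_t$ coming from Equation~(\ref{eq:Heis}):
\[\frac{d}{dt}\bigl(A_tA'_tH\bigr)=D_tA_tA'_tH+A_tD'_tA'_tH=\bigl(D_t+A_tD'_tA_t^{-1}\bigr)\bigl(A_tA'_tH\bigr).\]
Since $A_tD'_tA_t^{-1}$ is again a derivation (the conjugate of a derivation by an automorphism is a derivation) and $t\mapsto D_t+A_tD'_tA_t^{-1}$ is a smooth family of derivations, while $A_0A'_0=Id$, the uniqueness in Proposition~\ref{flow} identifies $A_tA'_t$ with the path generated by $D_t+A_tD'_tA_t^{-1}$. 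Statement (2) is proved in the same way: differentiating $A_tA_t^{-1}=Id$ gives $\frac{d}{dt}A_t^{-1}=-A_t^{-1}D_t$, so $A_t^{-1}$ satisfies Equation~(\ref{eq:Heis}) with the smooth family of derivations $\bigl(\frac{d}{dt}A_t^{-1}\bigr)A_t=-A_t^{-1}D_tA_t$, and one concludes by uniqueness again.

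For (3), fix $t$ and compute the $s$-derivative of the conjugation, using $\frac{d}{ds}A_s=D_sA_s$ and $\frac{d}{ds}A_s^{-1}=-A_s^{-1}D_s$:
\[\frac{d}{ds}\bigl(A_sD'_tA_s^{-1}\bigr)=D_sA_sD'_tA_s^{-1}-A_sD'_tA_s^{-1}D_s=\bigl[D_s,\,A_sD'_tA_s^{-1}\bigr],\]
which lies in $\qInn(M,*)$ by Proposition~\ref{prop:lienderaut}(2). Integrating from $0$ to $t$ and using $A_0=Id$ gives $A_tD'_tA_t^{-1}-D'_t=\int_0^t[D_s,A_sD'_tA_s^{-1}]\,ds$, and since $H\mapsto D_H$ is $\R[[\nu]]$-linear one has $\int_0^tD_{H_s}\,ds=D_{\int_0^tH_s\,ds}$; choosing a continuous family of Hamiltonians $H_s$ for the integrand, the right-hand side equals $D_{F_t}$ with $F_t:=\int_0^tH_s\,ds\in C^{\infty}(M)[[\nu]]$.

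For (4), the $\nu$-adic filtration makes $\nu\der(M,*)$ a complete filtered Lie algebra with $[\nu^i\der(M,*),\nu^j\der(M,*)]\subset\nu^{i+j}\der(M,*)$, so the Baker--Campbell--Hausdorff identity holds in this setting and $\exp(D)\exp(D')=\exp(E)$ with $E=D+D'+\frac{1}{2}[D,D']+\cdots$, a $\nu$-adically convergent series whose terms beyond $D+D'$ are iterated Lie brackets of elements of $\der(M,*)$. By Proposition~\ref{prop:lienderaut}(2) each such bracket lies in the ideal $\qInn(M,*)$, which is $\nu$-adically closed, being the kernel of the $\nu$-adically continuous map $\der(M,*)\to H^1_{dR}(M)[[\nu]]$, $D\mapsto[i(p(D))\omega]$ (cf. Proposition~\ref{prop:lienderaut}(1)); hence $E-(D+D')\in\qInn(M,*)$, i.e. $E=D+D'+D_F$ for some $F\in C^{\infty}(M)[[\nu]]$, and $\exp(D)\exp(D')=\exp(D+D'+D_F)$. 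Equivalently, one may bootstrap from (1) and (3): the path $t\mapsto\exp(tD)\exp(tD')$ is then generated by $D+\exp(tD)D'\exp(-tD)=D+D'+D_{G_t}$, and integrating the differential equation for its formal logarithm yields the claim at $t=1$.

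The computations for (1)--(3) are routine; the delicate point is (4), where one must be sure that the remainder of the Baker--Campbell--Hausdorff expansion beyond $D+D'$ is not merely of higher order in $\nu$ but genuinely quasi-inner. This is precisely where Proposition~\ref{prop:lienderaut}(2), that $[\der(M,*),\der(M,*)]\subset\qInn(M,*)$, is essential, since it forces every correction term obtained from the honest derivations $D$ and $D'$ to lie inside $\qInn(M,*)$.
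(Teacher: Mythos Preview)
Your proof is correct and matches the paper's argument almost verbatim for points (1)--(3): the paper also differentiates $A_tA'_t$ directly, obtains (2) from $Id=A_tA_t^{-1}$, and proves (3) by writing $A_tD'_tA_t^{-1}=D'_t+\int_0^t\frac{d}{ds}(A_sD'_tA_s^{-1})\,ds$ and invoking Proposition~\ref{prop:lienderaut}(2). For (4) the paper takes precisely your ``alternative'' route---applying (1) and (3) to the path $t\mapsto\exp(tD)\exp(tD')$ and then using the last assertion of Proposition~\ref{flow}---whereas your primary argument via the Baker--Campbell--Hausdorff series in the $\nu$-adically complete Lie algebra $\nu\der(M,*)$ is a slightly more self-contained alternative that avoids relying on that last assertion; both approaches hinge on the same key input, namely that iterated brackets of derivations are quasi-inner.
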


\begin{proof}
To prove point $1$, it suffices to differentiate the path $A_tA'_t$. Indeed,
$$\frac{d}{dt}A_tA'_t= (D_t+ A_tD'_tA_t^{-1})\circ A_tA'_t.$$
Point $2$ is obtained by applying $1$ to the path $Id=A_tA_t^{-1}$. \\
For point $3$, we compute $A_tD'_t(A_t)^{-1}= D'_t + \int_0^t \frac{d}{ds} A_sD'_t(A_s)^{-1} ds.$
Applying point 2, we compute $\frac{d}{ds} A_sD'_t(A_s)^{-1} ds= [D_s, A_sD'_t(A_s)^{-1}]$.
Now, the commutator of two derivations is quasi-inner  (by Proposition \ref{prop:lienderaut}).\\
Point $4$ is obtained by applying points $1$ and $3$ to the path $\exp(tD)\exp(tD')$.
\end{proof}


\section{The group of Hamiltonian automorphisms} \label{sect:ham}

We integrate the Lie algebra $\qInn(M,*)$ of quasi-inner derivations to produce the group $\ham(M,*)$.

Consider smooth one-parameter families of derivations of the form
$$D_{H_t}:= \frac{1}{\nu}[H_t,.]_* \in \qInn(M,*).$$ 
By Proposition $\ref{flow}$ there 
exists a one-parameter family of automorphisms $A^H_t$ such that $\frac{d}{dt}A^H_t=D_{H_t}A^H_t$.
We say that $A^H_t$ is {\bf generated} by the time-dependent Hamiltonian $H_t$.

\begin{defi}
The set of {\bf Hamiltonian automorphisms} is the set
\begin{equation}
\ham(M,*):= \{ A\in \aut(M,*) \ \vert \  A=A^H_1 \textrm{ for such } D_{H_t} \in \qInn(M,*) \}.
\end{equation}
The map $\cl$ restricts to a surjection $\cl:\ham(M,*)\mapsto \ham(M,\omega)$.
\end{defi}

\begin{lemme} \label{lem:AHr}
For all $A\in \ham(M,*)$, $D_G\in \qInn(M,*)$ : 
$AD_GA^{-1}=D_{AG}$.
\end{lemme}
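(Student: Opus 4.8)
The plan is to verify the identity $AD_GA^{-1}=D_{AG}$ for $A \in \ham(M,*)$ directly from the definition of automorphism. First I would observe that for any automorphism $A$ and any quasi-inner derivation $D_G$, one has, for all $F \in C^\infty(M)[[\nu]]$,
\begin{equation*}
AD_GA^{-1}(F) = A\bigl( \tfrac{1}{\nu}[G, A^{-1}F]_* \bigr) = \tfrac{1}{\nu}\bigl( A(G * A^{-1}F) - A(A^{-1}F * G) \bigr) = \tfrac{1}{\nu}[AG, F]_* = D_{AG}(F),
\end{equation*}
using only that $A$ is linear and respects $*$. So the computation itself is immediate and, in fact, does not even use that $A$ is Hamiltonian — it holds for every $A \in \aut(M,*)$.

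The one subtlety worth addressing is why the statement is phrased for $A \in \ham(M,*)$: the point is presumably that one wants $AG$ to again be an honest element of $C^\infty(M)[[\nu]]$ (so that $D_{AG}$ is a well-defined quasi-inner derivation), which is automatic since $A$ maps $C^\infty(M)[[\nu]]$ to itself. Thus the conclusion $AD_GA^{-1} \in \qInn(M,*)$ is what is being recorded, refining point 2 of Proposition~\ref{prop:rules} (which only gives $AD_G'A^{-1} = D_G' + D_{F_t}$ in general) to the sharper statement that when $D_G'$ is already quasi-inner, so is the conjugate, with explicit Hamiltonian $AG$. I would simply remark this and present the one-line computation above.

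There is no real obstacle here; the only thing to be careful about is bookkeeping with the anti-homomorphism convention — since $\cl$ and the assignment $A \mapsto A^*$-type maps are anti-homomorphisms, one should double-check that $A^{-1}$ really is the two-sided inverse of $A$ as a linear operator (it is, by Proposition~\ref{flow} applied to $-A_t^{-1}D_tA_t$, or simply because $A$ is a bijection). With that settled, the displayed equation is the whole proof.
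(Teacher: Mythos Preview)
Your proof is correct and is exactly the ``direct computation'' the paper alludes to; indeed, as you observe, the identity holds for any $A\in\aut(M,*)$, not just Hamiltonian ones. There is nothing to add.
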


\begin{proof}
The proof is a direct computation.
\end{proof}

\begin{theorem} \label{theor:hamgp}
Let $*$ be a star product on a symplectic manifold $(M,\omega)$, then $\ham(M,*)$ is a normal subgroup of $\aut_0(M,*)$. 

There is an anti-epimorphism $\cl:\ham(M,*)\rightarrow \ham(M,\omega)$.
\end{theorem}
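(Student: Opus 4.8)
\emph{Overview and the key identity.} The plan is to check, in order, that $\ham(M,*)$ is a subgroup of $\aut(M,*)$, that it is contained in $\aut_0(M,*)$, that it is normalized by $\aut_0(M,*)$, and finally that $\cl$ restricts to a surjective anti-homomorphism onto $\ham(M,\omega)$. Everything will rest on one elementary fact: for \emph{any} automorphism $B\in\aut(M,*)$ and any $D_G\in\qInn(M,*)$ one has $BD_GB^{-1}=D_{BG}$, since $B(G*B^{-1}F)-B(B^{-1}F*G)=BG*F-F*BG$. This is precisely the ``direct computation'' proving Lemma \ref{lem:AHr}, and it uses only that $B$ is an automorphism, not that $B\in\ham(M,*)$.

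\emph{Subgroup.} The identity is $A_1^H$ for $H_t\equiv 0$. If $A=A_1^H$ and $B=B_1^K$, I differentiate the path $t\mapsto A_t^HB_t^K$: using the Heisenberg equations for $A_t^H$ and $B_t^K$ together with $A_t^HD_{K_t}(A_t^H)^{-1}=D_{A_t^HK_t}$, one finds that $A_t^HB_t^K$ is generated by $D_{H_t+A_t^HK_t}\in\qInn(M,*)$; since $t\mapsto H_t+A_t^HK_t$ is a smooth family (Definitions \ref{smoothder} and \ref{def:smoothaut}), $AB=A_1^HB_1^K\in\ham(M,*)$. Likewise, by Proposition \ref{prop:rules}(2) and the conjugation identity the path $t\mapsto(A_t^H)^{-1}$ is generated by $-(A_t^H)^{-1}D_{H_t}A_t^H=D_{-(A_t^H)^{-1}H_t}\in\qInn(M,*)$, so $A^{-1}\in\ham(M,*)$. (Alternatively, the product step can be run off Proposition \ref{prop:rules}(1) and (3).)

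\emph{Containment in $\aut_0$ and normality.} For $A=A_1^H$, the classical limits $\cl(A_t^H)$ form a smooth path in $\symp(M,\omega)$ issuing from $Id$ (the order-zero part of a smooth family of automorphisms is jointly smooth in $t$ and the point of $M$), hence $\cl(A)\in\symp_0(M,\omega)$ and $A\in\aut_0(M,*)$. For normality, let $A=A_1^H\in\ham(M,*)$ and $B\in\aut_0(M,*)$: differentiating $t\mapsto BA_t^HB^{-1}$ and applying the conjugation identity shows this path is generated by $BD_{H_t}B^{-1}=D_{BH_t}$, a smooth family of quasi-inner derivations because $B$ is fixed; hence $BAB^{-1}=BA_1^HB^{-1}\in\ham(M,*)$. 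In fact $\ham(M,*)$ is normal in all of $\aut(M,*)$.

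\emph{Anti-epimorphism onto $\ham(M,\omega)$ and the main obstacle.} Since $\cl$ is an anti-homomorphism on $\aut(M,*)$, it remains to identify $\cl(\ham(M,*))$. Taking the order-zero part of the Heisenberg equation for $A=A_1^H$ and using Remark \ref{rem:-Dt0} and Proposition \ref{prop:lienderaut}, the path $\cl(A_t^H)$ is the inverse of the Hamiltonian isotopy generated by $X_{H_{t,0}}$, where $H_{t,0}$ is the $\nu^0$-term of $H_t$; in particular $\cl(A)\in\ham(M,\omega)$. Conversely, any $\phi\in\ham(M,\omega)$ is the time-one map of a Hamiltonian isotopy generated by a smooth time-dependent Hamiltonian $G_t\in C^{\infty}(M)$; lifting $G_t$ (viewed in $C^{\infty}(M)\subset C^{\infty}(M)[[\nu]]$) to $D_{G_t}\in\qInn(M,*)$ and reading the classical limit off the identification above yields an element of $\ham(M,*)$ mapping to $\phi$ under $\cl$ (up to an inversion, harmless since $\ham(M,\omega)$ is a group), so $\cl$ is onto. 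I expect the only genuinely delicate point to be this last one: reconciling the \emph{anti}-homomorphism $\cl$ with the inverse/sign bookkeeping of Remark \ref{rem:-Dt0}, so as to be sure the classical limit of a Hamiltonian automorphism really is a Hamiltonian diffeomorphism and that surjectivity holds on the nose. The subgroup and normality assertions are, by contrast, immediate from Proposition \ref{prop:rules} and the conjugation identity $BD_GB^{-1}=D_{BG}$.
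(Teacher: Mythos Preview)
Your proof is correct and follows essentially the same route as the paper: closure under products and inverses via the generating Hamiltonians $H_t+A_t^HK_t$ and $-(A_t^H)^{-1}H_t$, normality via the conjugation identity $BD_GB^{-1}=D_{BG}$, and the $\cl$ statement read off from the order-zero part of the Heisenberg equation. You are in fact more careful than the paper in two places: you explicitly note that the conjugation identity holds for \emph{any} automorphism $B$ (the paper states Lemma \ref{lem:AHr} only for $A\in\ham(M,*)$ but then silently uses it for $A\in\aut_0(M,*)$ in the normality step), and you spell out the ``anti'' bookkeeping for surjectivity of $\cl$, which the paper dismisses as immediate.
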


\begin{proof}
Let $A$, $B\in \ham(M,*)$, we show that $AB\in \ham(M,*)$. Write $A^H_t$ and $B^G_t$, the one-parameter families generated 
by $H_t$ and $G_t$ respectively such that $A=A^H_1$ and $B=B^G_1$.
Using Proposition \ref{prop:rules} and Lemma \ref{lem:AHr}, we see that $A^H_tB^G_t$ is generated by the Hamiltonian  $K_t:=H_t+A^H_tG_t$. So, $AB=A^H_1B^G_1$ is in $\ham(M,*)$.

Let $A\in \ham(M,*)$. Since $A=A_1^H$ for some $D_{H_t}\in \qInn(M,*)$, then $A^{-1}=(A^H_1)^{-1}$.
Using the computation rules \ref{prop:rules} and Lemma \ref{lem:AHr}, we know that $(A^H_t)^{-1}$ is generated by $-(A^H_t)^{-1}H_t$.

The fact that $\ham(M,*)$ is a normal subgroup of $\aut_0(M,*)$ is a consequence of the following identities. Let $A\in \aut_0(M,*)$ and $A_t^H$ the family of Hamiltonian
automorphisms generated by $H_t\in C^{\infty}(M)[[\nu]]$ then 
\begin{equation}
\frac{d}{dt}A A^H_t A^{-1}=AD_{H_t}A^H_tA^{-1}=D_{AH_t}A A^H_t A^{-1}.
\end{equation}

We immediately have that the projection $\cl$ is an anti-epimorphism of group. 
\end{proof}


\begin{prop} \label{equivalence} 
Let $*$ and $*'$ be two equivalent star products on $C^{\infty}(M)[[\nu]]$. Denote
by $T:(C^{\infty}(M)[[\nu]],*) \rightarrow (C^{\infty}(M)[[\nu]],*')$ an equivalence of
star product. Then the map
\begin{equation}
C_T:\ham(M,*) \rightarrow \ham(M,*') : A \mapsto TAT^{-1}
\end{equation}
is an isomorphism of group.
\end{prop}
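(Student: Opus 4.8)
The plan is to verify the three things that make $C_T$ an isomorphism of groups: that $C_T$ lands in $\ham(M,*')$, that it is a group homomorphism, and that it is bijective with inverse $C_{T^{-1}}$. The homomorphism property is immediate and formal: for $A,B\in\ham(M,*)$ we have $C_T(AB)=TABT^{-1}=(TAT^{-1})(TBT^{-1})=C_T(A)C_T(B)$, and $C_T(Id)=Id$; the map $C_{T^{-1}}\colon\ham(M,*')\to\ham(M,*)$ is defined the same way and $C_{T^{-1}}\circ C_T=\mathrm{id}$, $C_T\circ C_{T^{-1}}=\mathrm{id}$ by the same cancellation, so once we know $C_T$ is well defined (and $C_{T^{-1}}$ is well defined by the symmetric argument) bijectivity follows for free. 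So the only real content is showing $TAT^{-1}\in\ham(M,*')$ whenever $A\in\ham(M,*)$.

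For that, first note that conjugation by $T$ sends derivations of $*$ to derivations of $*'$: if $D(F*G)=DF*G+F*DG$ then, writing $F=T^{-1}F'$, $G=T^{-1}G'$ and applying $T$, one gets $(TDT^{-1})(F'*'G')=(TDT^{-1})F'*'G'+F'*'(TDT^{-1})G'$. Moreover conjugation by $T$ sends quasi-inner derivations to quasi-inner derivations with the Hamiltonian transported by $T$: from $D_H(F)=\frac1\nu[H,F]_*=\frac1\nu(H*F-F*H)$ we compute $T D_H T^{-1}(F')=\frac1\nu\bigl(T(H)*'F'-F'*'T(H)\bigr)=D_{TH}(F')$, i.e. $T D_H T^{-1}=D'_{TH}$, where $D'$ denotes the quasi-inner derivation for $*'$. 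The key point is that $T$ is a fixed ($t$-independent) $\R[[\nu]]$-linear operator, so it commutes with $\frac{d}{dt}$.

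Now take $A\in\ham(M,*)$, so $A=A_1^H$ where $A_t^H$ is the smooth path of automorphisms generated by the smooth family $D_{H_t}\in\qInn(M,*)$, i.e. $\frac{d}{dt}A_t^H=D_{H_t}A_t^H$ with $A_0^H=Id$. Set $B_t:=TA_t^H T^{-1}$. Then $B_t$ is a smooth path in $\aut(M,*')$ with $B_0=Id$, and differentiating (legitimate because $T$ is $t$-independent) gives
\begin{equation}
\frac{d}{dt}B_t=T\Bigl(\frac{d}{dt}A_t^H\Bigr)T^{-1}=T D_{H_t} A_t^H T^{-1}=\bigl(T D_{H_t} T^{-1}\bigr)\bigl(T A_t^H T^{-1}\bigr)=D'_{TH_t}\,B_t,
\end{equation}
where in the last step we used the identity above. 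Since $TH_t$ is again a smooth family in $C^\infty(M)[[\nu]]$, the path $B_t$ is generated by the smooth family of quasi-inner derivations $D'_{TH_t}\in\qInn(M,*')$, hence by definition $B_1=TAT^{-1}\in\ham(M,*')$. The only genuinely delicate point is this smoothness/well-posedness check — making sure that $TH_t$ is a smooth family so that Proposition \ref{flow} applies and $B_t$ really is ``the'' path it generates (which also uses uniqueness in Proposition \ref{flow} to identify $B_t$ with that generated path) — but this is routine given that $T$ is a fixed series of differential operators. Applying the same argument to $T^{-1}$ shows $C_{T^{-1}}$ is well defined, and combined with the formal cancellations above this completes the proof that $C_T$ is a group isomorphism. $\qed$
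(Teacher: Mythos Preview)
Your proof is correct and follows exactly the same approach as the paper: the key observation in both is that the conjugated path $TA_t^H T^{-1}$ is generated by the transported Hamiltonian $TH_t$, hence lands in $\ham(M,*')$, while the homomorphism and invertibility properties are formal. The paper's proof simply states these facts in three lines, whereas you have spelled out the verification that $TD_HT^{-1}=D'_{TH}$ and the smoothness check in more detail.
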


\begin{proof}
Let $A_t^H\in \ham(M,*)$ generated by $H_t\in C^{\infty}(M)[[\nu]]$, then 
$C_T(A_t^H)=TA_t^HT^{-1}$ is generated by $TH_t$. So, $C_T(A_t^H)\in \ham(M,*')$.
The map $C_T$ is clearly invertible and it is a morphism of group.
\end{proof}


\section{The formal flux morphism} \label{sect:flux}

The goal of this section is to describe $\ham(M,*)$ 
as the kernel of a morphism on $\aut_0(M,*)$; as in the classical case, where the group of
Hamiltonian diffeomorphisms is the kernel of the flux morphism \cite{ban2}.


At the level of Lie algebras of derivations. The algebra $\qInn(M,*)$ is the kernel of the epimorphism
$$F:D\in \der_0(M,*) \mapsto [i(p(D))\omega]\in H^1_{dR}(M)[[\nu]]$$
where we endow $H^1_{dR}(M)[[\nu]]$ with the trivial Lie bracket.

To produce the formal flux morphism we will integrate the morphism $F$ to the group $\aut_0(M,*)$.
For this, we will consider smooth paths in $\aut_0(M,*)$. In the sequel, all the paths considered will be parametrized by $t\in I:=[0,1]$.

All the proofs are similar to the one developped by Banyaga \cite{ban2}, see also \cite{ban}.

Consider $\{A_t\}$ a smooth path in $\aut_0(M,*)$ starting at the identity.  
Set $D_t$ the derivation defined by $\frac{d}{dt}A_t=D_tA_t$.
We set 
\begin{equation} \label{fluxpredef} 
\fl^*(\{A_t\}):=\int_0^1 [i(p(D_t))\omega] dt \in H^1_{dR}(M)[[\nu]].
\end{equation}
We decorate the flux by a $*$ to recall the underlying star product.
We say that two paths $\{A_t\}$ and $\{A'_t\}$ are homotopic with fixed endpoints if there exists a smooth 
map $A_{..}: I\times I \rightarrow \aut_0(M,*): (s,t) \mapsto A_{ts}$ such that $A_{t0}=A_t$ and $A_{t1}=A'_t$ for all $t$, $A_{0s}=A_0$ and $A_{1s}=A_1$ for all $s$. 

\begin{prop} \label{homotdep} 
$\fl^*(\{A_t\})$ only depends on the homotopy class with fixed endpoints of the path $\{A_t\}$.
\end{prop}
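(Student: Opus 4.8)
The plan is to mimic the classical argument of Banyaga. Let $A_{ts}$ be a smooth homotopy with fixed endpoints between $\{A_t\}=\{A_{t0}\}$ and $\{A'_t\}=\{A_{t1}\}$. For each fixed $s$, write $\frac{d}{dt}A_{ts}=D^t_{ts}A_{ts}$ for the "$t$-derivation" $D^t_{ts}$, and similarly $\frac{d}{ds}A_{ts}=D^s_{ts}A_{ts}$ for the "$s$-derivation" $D^s_{ts}$. Define $f(s):=\fl^*(\{A_{ts}\})=\int_0^1[i(p(D^t_{ts}))\omega]\,dt\in H^1_{dR}(M)[[\nu]]$. The goal is to show $f'(s)=0$, so that $f(0)=f(1)$, which is exactly the claim.

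The key step is a commutation identity for the two families of derivations coming from a two-parameter family of automorphisms. Differentiating $\frac{\partial}{\partial s}\frac{\partial}{\partial t}A_{ts}=\frac{\partial}{\partial t}\frac{\partial}{\partial s}A_{ts}$ and using $A_{ts}^{-1}$ on the right, one gets
\begin{equation*}
\frac{\partial}{\partial s}D^t_{ts}-\frac{\partial}{\partial t}D^s_{ts}=[D^t_{ts},D^s_{ts}].
\end{equation*}
Now apply $p$ and then $[i(\cdot)\omega]$. By Proposition \ref{prop:lienderaut}, the commutator $[D^t_{ts},D^s_{ts}]$ is quasi-inner, hence lies in $\Ker F$, i.e. $[i(p([D^t_{ts},D^s_{ts}]))\omega]=0$ in $H^1_{dR}(M)[[\nu]]$. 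Since $p$ is $\R[[\nu]]$-linear and commutes with $\partial/\partial s$ (on coefficients, $p$ just reads off the symplectic vector fields, smoothly in the parameters), and since taking de Rham classes commutes with $\partial/\partial s$ as well, we obtain
\begin{equation*}
\frac{\partial}{\partial s}[i(p(D^t_{ts}))\omega]=\frac{\partial}{\partial t}[i(p(D^s_{ts}))\omega].
\end{equation*}
Integrating over $t\in[0,1]$ and interchanging $\frac{d}{ds}$ with $\int_0^1$ gives
\begin{equation*}
f'(s)=\int_0^1\frac{\partial}{\partial t}[i(p(D^s_{ts}))\omega]\,dt=[i(p(D^s_{1s}))\omega]-[i(p(D^s_{0s}))\omega].
\end{equation*}
Finally, the fixed-endpoint condition $A_{0s}=A_0$ and $A_{1s}=A_1$ for all $s$ forces $\frac{\partial}{\partial s}A_{0s}=\frac{\partial}{\partial s}A_{1s}=0$, hence $D^s_{0s}=D^s_{1s}=0$, so $f'(s)=0$ and $f$ is constant.

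The main obstacle I expect is the bookkeeping around $p$ and the de Rham class: one must check that $p$ is compatible with differentiation in the parameter $s$ (this is where the smoothness of the families enters, via the remark after Definition \ref{smoothder}) and that $[i(p(\cdot))\omega]$ can be differentiated under the integral sign — both are routine once the smoothness framework of Section \ref{sect:Recall} is in place, but they are the points that need care. One should also note that everything is understood order by order in $\nu$, so all the interchanges of limits/derivatives/integrals reduce to the classical smooth statements coefficient by coefficient, exactly as in Banyaga \cite{ban2}.
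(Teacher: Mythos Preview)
Your argument is correct and is essentially the paper's own proof: the paper isolates your commutation identity as a separate Lemma (there written $\frac{d}{ds}D_{ts}=\frac{d}{dt}\tilde D_{ts}+[\tilde D_{ts},D_{ts}]$), then applies $[i(p(\cdot))\omega]$, kills the commutator via Proposition~\ref{prop:lienderaut}, integrates in $t$, and uses the fixed-endpoint condition exactly as you do. The only slip is a sign: your displayed identity should read $\frac{\partial}{\partial s}D^t_{ts}-\frac{\partial}{\partial t}D^s_{ts}=[D^s_{ts},D^t_{ts}]$, but since the commutator is quasi-inner either way, this does not affect the argument.
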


Let $A_{ts}$ be a homotopy with fixed endpoints of a path $\{A_t\}$ starting at the identity. There is 
two different ways to define a derivation : 
\begin{equation*}
D_{ts}  :=  (\frac{d}{dt}A_{ts}) \circ A_{ts}^{-1} \textrm{ and }
\tilde{D}_{ts}  :=  (\frac{d}{ds}A_{ts}) \circ A_{ts}^{-1}.
\end{equation*}

\begin{lemme} \label{Dts} 
$\frac{d}{ds}D_{ts}= \frac{d}{dt}\tilde{D}_{ts} + [\tilde{D}_{ts},D_{ts}].$
\end{lemme}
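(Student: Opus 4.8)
The plan is to differentiate the defining relations for $D_{ts}$ and $\tilde D_{ts}$ and compare, exactly as in the classical flux computation. The identity asserted is a ``flat connection'' relation (zero-curvature equation) for the family of derivation-valued $1$-forms $D_{ts}\,dt + \tilde D_{ts}\,ds$ on the square $I\times I$; it is the infinitesimal shadow of the fact that $D_{ts}$ and $\tilde D_{ts}$ come from a genuine two-parameter family of automorphisms $A_{ts}$.

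First I would write $\partial_t A_{ts} = D_{ts}\circ A_{ts}$ and $\partial_s A_{ts} = \tilde D_{ts}\circ A_{ts}$, where all derivatives are taken coefficient-wise in $\nu$ and pointwise on $M$ (everything is smooth in $(s,t,x)$ by the smoothness hypothesis on the homotopy, so the mixed partials commute: $\partial_s\partial_t A_{ts} = \partial_t\partial_s A_{ts}$). Then I would compute $\partial_s\partial_t A_{ts}$ in two ways. On one hand,
\[
\partial_s\bigl(D_{ts}\circ A_{ts}\bigr) = (\partial_s D_{ts})\circ A_{ts} + D_{ts}\circ \tilde D_{ts}\circ A_{ts}.
\]
On the other hand,
\[
\partial_t\bigl(\tilde D_{ts}\circ A_{ts}\bigr) = (\partial_t \tilde D_{ts})\circ A_{ts} + \tilde D_{ts}\circ D_{ts}\circ A_{ts}.
\]
Equating the two and composing on the right with $A_{ts}^{-1}$ (which exists since $A_{ts}$ is an automorphism) gives
\[
\partial_s D_{ts} + D_{ts}\circ \tilde D_{ts} = \partial_t \tilde D_{ts} + \tilde D_{ts}\circ D_{ts},
\]
which rearranges to $\partial_s D_{ts} = \partial_t \tilde D_{ts} + [\tilde D_{ts}, D_{ts}]$, the claimed identity.

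The only point requiring care — and the mild obstacle — is justifying that $\partial_s D_{ts}$ and $\partial_t\tilde D_{ts}$ make sense as derivations and that differentiation under the flow is legitimate: one must check that $s\mapsto D_{ts}$ is itself a smooth family of derivations (so that its $s$-derivative is again a derivation), and similarly for $\tilde D_{ts}$ in $t$. This follows from the smoothness of the homotopy $A_{ts}$ in Proposition \ref{homotdep}'s setup together with Proposition \ref{flow}, reading off $D_{ts} = (\partial_t A_{ts})\circ A_{ts}^{-1}$ and noting that the set of derivations is closed under the relevant limits. I would also remark that $[\tilde D_{ts},D_{ts}]$ is in fact quasi-inner by Proposition \ref{prop:lienderaut}, which is what makes the subsequent cohomological argument (the proof of Proposition \ref{homotdep}) work, since the flux depends only on $p(D_t)$ and quasi-inner derivations map to exact forms. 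But for the Lemma itself, only the formal identity above is needed, and it is a two-line computation once the smoothness is in place.
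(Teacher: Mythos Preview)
Your argument is correct and is essentially the same as the paper's: both extract the identity from the equality of mixed partials $\partial_s\partial_t A_{ts}=\partial_t\partial_s A_{ts}$. The only cosmetic difference is that the paper phrases the computation as differentiating $D_{ts}=(\partial_t A_{ts})\circ A_{ts}^{-1}$ and $\tilde D_{ts}=(\partial_s A_{ts})\circ A_{ts}^{-1}$ via the product rule together with point~2 of Proposition~\ref{prop:rules} (to handle $\partial_s A_{ts}^{-1}$), whereas you equate $\partial_s(D_{ts}A_{ts})$ with $\partial_t(\tilde D_{ts}A_{ts})$ and then post-compose with $A_{ts}^{-1}$; unwinding either route yields the same two-line computation.
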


\begin{proof}
Like in the classical case \cite{ban2}, the proof relies on the computations of $\frac{d}{ds}D_{ts}$ and 
$\frac{d}{dt}\tilde{D}_{ts}$ using point $2$ of Proposition \ref{prop:rules}.
\end{proof}

\begin{proof}[Proof of Proposition \ref{homotdep}]
Consider $\{A_{ts}\}$ a homotopy of paths with fixed endpoints. Then, for each $s$, we can compute the flux of the 
path $\{A_{ts}\}$. We show that $\fl^*(A_{t0})=\fl^*(A_{t1})$.
\begin{eqnarray*}
\frac{d}{ds} \fl^*(A_{ts}) & = & \int_0^1 [i(p(\frac{d}{ds}D_{ts}))\omega] dt, \\
& = & \int_0^1 [i(p(\frac{d}{dt}\tilde{D}_{ts}))\omega] dt + \int_0^1[ i(p([\tilde{D}_{ts},D_{ts}]))\omega] dt. \\
& = & [i(p(\tilde{D}_{1s}))\omega - i(p(\tilde{D}_{0s}))\omega].
\end{eqnarray*}
Since the homotopy is with fixed endpoints, $\tilde{D}_{1s}$ and $\tilde{D}_{0s}$ vanishes.
It means that the $\fl^*(A_{ts})$ does not depend on $s$. Then $\fl^*(A_{t0})=\fl^*(A_{t1})$.
\end{proof}

Define $\widetilde{\aut}_0(M,*)$ to be the set of smooth homotopy classes with fixed endpoints of smooth paths $A_t$ of 
automorphisms of the star product starting at the identity. The group structure on $\widetilde{\aut}_0(M,*)$ 
is defined as follows. Let $\{A_t\}$ and $\{B_t\} \in \widetilde{\aut}_0(M,*)$, we set $\{A_t\}.\{B_t\}:= \{A_tB_t\}$.

\begin{theorem} \label{theor:formalflux}
The map
\begin{equation}
\fl^* :\widetilde{\aut}_0(M,*) \rightarrow H^1_{dR}(M)[[\nu]] : \{A_t\} \mapsto \fl^*(\{A_t\})
\end{equation}
is a surjective group morphism.
\end{theorem}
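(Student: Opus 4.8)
The plan is to verify, in order, the three claims packaged into the statement: (i) $\fl^*$ is well defined on $\widetilde{\aut}_0(M,*)$, (ii) it is a group homomorphism, and (iii) it is surjective. Claim (i) is almost entirely done: Proposition \ref{homotdep} shows $\fl^*(\{A_t\})$ depends only on the homotopy class with fixed endpoints, so $\fl^*$ descends to $\widetilde{\aut}_0(M,*)$; one only needs to remark that the group operation $\{A_t\}\cdot\{B_t\}=\{A_tB_t\}$ is compatible with homotopy, which is routine.

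For claim (ii), I would compute $\fl^*(\{A_tB_t\})$ directly. If $\{A_t\}$ is generated by $D_t$ (meaning $\frac{d}{dt}A_t=D_tA_t$) and $\{B_t\}$ by $D'_t$, then by point $1$ of Proposition \ref{prop:rules} the path $\{A_tB_t\}$ is generated by $D_t+A_tD'_tA_t^{-1}$. Hence
\begin{equation*}
\fl^*(\{A_tB_t\})=\int_0^1\big[i\big(p(D_t)\big)\omega\big]\,dt+\int_0^1\big[i\big(p(A_tD'_tA_t^{-1})\big)\omega\big]\,dt.
\end{equation*}
The first term is $\fl^*(\{A_t\})$. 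For the second term, point $3$ of Proposition \ref{prop:rules} gives $A_tD'_tA_t^{-1}=D'_t+D_{F_t}$ for some family $F_t\in C^\infty(M)[[\nu]]$; since $D_{F_t}$ is quasi-inner, Proposition \ref{prop:lienderaut}(1) shows $p(D_{F_t})=X_{F_t}$ is a Hamiltonian vector field, so $i(p(D_{F_t}))\omega=dF_t$ is exact and $[i(p(A_tD'_tA_t^{-1}))\omega]=[i(p(D'_t))\omega]$ in $H^1_{dR}(M)[[\nu]]$. Therefore the second term equals $\fl^*(\{B_t\})$, and $\fl^*$ is a homomorphism. (This is the formal analogue of Banyaga's computation, and the key structural input is exactly that $p$ sends quasi-inner derivations to exact $1$-forms.)

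For claim (iii), surjectivity, I would exhibit explicit preimages. Given a class $c=\sum_r\nu^r c_r\in H^1_{dR}(M)[[\nu]]$, choose closed $1$-forms $\alpha_r$ representing $c_r$, set $\alpha:=\sum_r\nu^r\alpha_r$, let $X\in\Symp(M,\omega)[[\nu]]$ be the (time-independent) formal symplectic vector field with $i(X)\omega=\alpha$, and let $D\in\der_0(M,*)$ be the derivation with $p(D)=X$ (using the bijection $p$). By Proposition \ref{flow} the autonomous derivation $D$ generates a smooth one-parameter family $A_t$ of automorphisms, which lies in $\aut_0(M,*)$ because its classical limit is the symplectic flow of $X_0$ (up to the inversion noted in Remark \ref{rem:-Dt0}, which does not affect membership in $\symp_0$). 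Then $\fl^*(\{A_t\})=\int_0^1[i(p(D))\omega]\,dt=[\alpha]=c$, so $\fl^*$ is onto.

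The only genuinely delicate point is the one already isolated above: that conjugation of a derivation by an automorphism changes it only by a quasi-inner derivation (point $3$ of Proposition \ref{prop:rules}), so that the "error term" in the homomorphism computation is exact in cohomology. Everything else is bookkeeping — smoothness of the concatenated/conjugated families, compatibility of the group law with homotopy, and the fact that $p$ is additive and intertwines conjugation appropriately. I would therefore spend the bulk of the written proof on the homomorphism step and treat well-definedness and surjectivity briefly.
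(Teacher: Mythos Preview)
Your proposal is correct and follows essentially the same approach as the paper: well-definedness via Proposition~\ref{homotdep}, and the homomorphism property via points~1 and~3 of Proposition~\ref{prop:rules} together with the fact that quasi-inner derivations map under $p$ to Hamiltonian vector fields. In fact you go slightly further than the paper's written proof, which omits the surjectivity argument entirely; your explicit construction (choose $D$ with $p(D)=X$, $i(X)\omega=\alpha$, and exponentiate) is the natural one and fills that gap.
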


\begin{proof}
By Proposition \ref{homotdep}, the map $\fl^*$ is well defined on $\widetilde{\aut}_0(M,*)$.

We prove that $\fl^*$ is a group morphism. Let $\{A_t\}$ and $\{B_t\} \in\widetilde{\aut}_0(M,*)$
generated by $D_t$ and $D'_t$ respectively. Then the path $\{A_tB_t\}$ is generated by the path $D_t + A_tD'_t(A_t)^{-1}$, by the computation rules \ref{prop:rules}.
Again by Proposition \ref{prop:rules}, $A_tD'_t(A_t)^{-1}=D'_t+D_{F_t}$ for some $F_t \in C^{\infty}(M)[[\nu]]$.
Moreover, $D_{F_t}$ is a smooth family in $\qInn(M,*)$.

Now, we compute the flux.
\begin{eqnarray*}
\fl^*(\{A_tB_t\}) & = & \int_0^1 [ i(p(D_t+ A_tD'_t(A_t)^{-1}))\omega] dt \\
& = & \int_0^1 [i(p(D_t))\omega] dt +\int_0^1 [i(p(D'_t))\omega] dt +\int_0^1[ i(p(D_{F_t}))\omega] dt \\
& = & \fl^*(\{A_t\}) + \fl^*(\{B_t\}).
\end{eqnarray*} 
\end{proof}

\noindent We can now characterize Hamiltonian automorphisms using the formal flux morphism.

\begin{theorem} \label{theor:caractham}
Let $A \in \aut_0(M,*)$.

Then $A \in \ham(M,*)$ if and only if there exists a smooth path $A_t$ of automorphisms,
with $A_0=Id$ and $A_1=A$, such that $\fl^*(\{ A_t\})=0$.

Moreover, the path $A_t$ can be homotoped with fixed endpoints to a path of the form $A_t^H$ generated by some
$H_t \in C^{\infty}(M)[[\nu]]$.
\end{theorem}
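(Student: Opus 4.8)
The plan is to prove both implications by reducing to the classical case via the projection $p:\der_0(M,*)\to\Symp(M,\omega)[[\nu]]$ and by using the computation rules of Proposition \ref{prop:rules}.

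First I would handle the easy implication. If $A=A_1^H\in\ham(M,*)$, then by definition there is a smooth path $A_t^H$ generated by $D_{H_t}\in\qInn(M,*)$ with $A_0^H=Id$ and $A_1^H=A$. Since $D_{H_t}$ is quasi-inner, Proposition \ref{prop:lienderaut}(1) gives $p(D_{H_t})=X_{H_{t,0}}$ (a Hamiltonian vector field) formally in $\nu$, hence $i(p(D_{H_t}))\omega=dH_t$ is exact for every $t$, so each $[i(p(D_{H_t}))\omega]=0$ in $H^1_{dR}(M)[[\nu]]$ and therefore $\fl^*(\{A_t^H\})=\int_0^1[i(p(D_{H_t}))\omega]\,dt=0$.

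For the converse and the ``moreover'' part, suppose $A_t$ is a smooth path in $\aut_0(M,*)$ with $A_0=Id$, $A_1=A$, generated by $D_t$, and $\fl^*(\{A_t\})=0$. Write $p(D_t)=X_t\in\Symp(M,\omega)[[\nu]]$; by definition of $\fl^*$ we have $\int_0^1[i(X_t)\omega]\,dt=0$ in $H^1_{dR}(M)[[\nu]]$. The strategy, following Banyaga, is to homotope $\{A_t\}$ with fixed endpoints to a path generated by quasi-inner derivations. The classical argument produces, from a symplectic isotopy whose flux vanishes, a homotopic Hamiltonian isotopy; concretely one first reparametrizes/modifies the path so that $\int_0^t[i(X_s)\omega]\,ds$ is constant (hence zero) for all $t$ rather than just at $t=1$ — this uses that $\fl^*$ depends only on the homotopy class (Proposition \ref{homotdep}) together with a loop-subtraction trick — and then the closed $1$-forms $i(X_t)\omega$ become exact, say $i(X_t)\omega=dH_{t,0}$ for a smooth family $H_{t,0}\in C^\infty(M)$. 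I would then lift this to the star product: choose a smooth family $H_t\in C^\infty(M)[[\nu]]$ with $p(D_{H_t})=X_t$ (i.e. $i(X_t)\omega=dH_t$ formally, so $H_t=H_{t,0}+\nu(\ldots)$ is determined up to a formal constant since $M$ is closed and connected), giving a smooth family $D_{H_t}\in\qInn(M,*)$ with $p(D_{H_t})=p(D_t)$, hence $D_t-D_{H_t}\in\nu\der_0(M,*)$ has the same symplectic part. Let $A_t^H$ be the Hamiltonian path generated by $D_{H_t}$; using Proposition \ref{prop:rules}(2),(3) and the fact that commutators of derivations are quasi-inner (Proposition \ref{prop:lienderaut}(2)), one shows $B_t:=(A_t^H)^{-1}A_t$ is a path of automorphisms generated by a quasi-inner derivation and with vanishing flux, so by induction on the degree in $\nu$ one adjusts $H_t$ order by order so that $A_t=A_t^H$ up to a fixed-endpoint homotopy; in particular $A=A_1=A_1^H\in\ham(M,*)$.

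The main obstacle is the middle step: producing the fixed-endpoint homotopy from $\{A_t\}$ to a path along which the running flux $\int_0^t[i(X_s)\omega]\,ds$ vanishes identically (not merely at the endpoint), and simultaneously controlling all the formal higher-order corrections in $\nu$ so that the resulting quasi-inner generator $D_{H_t}$ is globally defined and smooth in $t$. Classically this is Banyaga's argument that the flux only obstructs through its endpoint value once one allows homotopies; here one must additionally check at each order in $\nu$ that the correction term, which lies in $\nu\der_0(M,*)$ and has exact symplectic part (because we have matched $p(D_t)$ with $p(D_{H_t})$), can be absorbed into a quasi-inner derivation by Lemma \ref{lem:AHr} and the computation rules — this is where Proposition \ref{prop:rules}(3) and (4) do the real work, turning the product of two Hamiltonian-type exponentials back into a Hamiltonian-type exponential modulo an inner correction. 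The rest is a routine induction.
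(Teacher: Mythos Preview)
Your outline is right in spirit---the easy direction is fine, and the idea of homotoping to a path whose \emph{running} flux $\int_0^T[i(p(D_t))\omega]\,dt$ vanishes for every $T$ is exactly Banyaga's strategy, which the paper adapts verbatim to the star-product setting. But two things need fixing.

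First, the homotopy step is the whole content of the hard direction, and you leave it as a black box (``loop-subtraction trick'', ``following Banyaga''). The paper's construction is explicit and worth knowing: after replacing $A_t$ by $A_t A_t^{-(A_t)^{-1}F}$ to arrange that $\int_0^1 i(p(D_t))\omega\,dt=0$ as an actual $1$-form (not merely in cohomology), one sets $D'_t:=-\int_0^t D_u\,du$, lets $Q_t^s$ be the one-parameter group generated by $D'_t$ in the variable $s$, and defines the homotopy $A_{ts}:=Q_t^s A_t$. Since $D'_0=D'_1=0$, the endpoints are fixed. A direct flux computation (using that $\fl^*$ is a homomorphism and Proposition~\ref{homotdep}) then gives $\fl^*(\{A_{t1}\}_{0\le t\le T})=0$ for every $T\in[0,1]$.

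Second, your final step is overcomplicated and contains a genuine confusion. Once the running flux of the homotoped path vanishes, differentiating in $T$ shows $i(p(\overline{D}_t))\omega$ is exact for every $t$, and by Proposition~\ref{prop:lienderaut}(1) this already means $\overline{D}_t\in\qInn(M,*)$: there is nothing left to correct. Your sentence ``$p(D_{H_t})=p(D_t)$, hence $D_t-D_{H_t}\in\nu\der_0(M,*)$'' undermines itself: $p$ is a \emph{bijection} from $\der(M,*)$ to $\Symp(M,\omega)[[\nu]]$, so $p(D_t)=p(D_{H_t})$ forces $D_t=D_{H_t}$ on the nose. The induction on powers of $\nu$ you sketch, and the appeal to Proposition~\ref{prop:rules}(4) to recombine exponentials, are unnecessary---the proof ends as soon as the running flux is zero.
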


\begin{proof}
Assume $A\in \ham(M,*)$. Then  $A=A_1^H$ for some smooth family $D_{H_t} \in \qInn(M,*)$. Then, $\fl^*(\{A_t^H\})= \int_0^1 [i(p(D_{H_t})) \omega ]dt=0$, 
because $p(D_{H_t})$ is a Hamiltonian vector field.

Conversely, assume there exists a smooth path $\{A_t\}$ of automorphisms connecting the identity to $A$ which has 
vanishing $\fl$. This means that there exists a series $F \in C^{\infty}(M)[[\nu]]$ such that $ \int_0^1 [i(p(D_t))\omega] dt = [dF].$
We want to prove that $A\in \ham(M,*)$.

We first observe that we can assume that $\int_0^1 i(p(D_t))\omega dt=0$. Indeed, consider the path of 
automorphisms $C_t:= A_{t} A_t^{-(A_t)^{-1}F}$. Then $\{C_t\}$ is generated by $D_t-D_{F}$ and 
$\int_0^1i(p(D_t-D_F))\omega=0$. Now, since $\ham(M,*)$ is a group, it is sufficient to prove the theorem for $C_1$.

So, suppose our smooth path $\{A_t\}$ satisfies $\int_0^1 i(p(D_t))\omega dt=0$.
Define the family of derivations $D'_t:=-\int_0^t D_u du$. For each $t$, it generates a one-parameter 
group of automorphisms $Q_t^s$, such that $\frac{d}{ds} Q_t^s:=D'_t Q_t^s$. Remark that, since $D'_0=D'_1=0$,
we get $Q_0^s=Q_1^s=Id$. It implies that $A_{ts}:=Q_t^sA_t$ is a homotopy of path with fixed endpoints.
We conclude by showing that $A_{t1}=Q_t^1A_t$ is generated by some series of functions.
We compute
\begin{eqnarray*}
\fl^*(\{A_{t1}\}_{0\leq t \leq T}) & = & \fl^*(\{Q^1_t\}_{0\leq t \leq T}) + \fl^*(\{A_{t}\}_{0\leq t \leq T}) \\
& = & \fl^*(\{Q^s_T\}_{0\leq s \leq 1}) + \fl^*(\{A_{t}\}_{0\leq t \leq T}) \\
& = & \int_0^1[i(p(D'_T))\omega] dt + \int_0^T [i(p(D_t))\omega] dt=0 \\
\end{eqnarray*} 
So, if we write $\overline{D}_t$ the derivation generating the path $A_{t1}$, then we have proved that $\int_0^T i(p(\overline{D}_t)) \omega dt=dF_T$, for all $T\in [0,1]$. Then, 
$i(p(\overline{D}_t))\omega = d(\frac{d}{dt}F_t)$. So, $\overline{D}_t$
is a quasi-inner derivation, which means that $A_{t1}= A_t^{G}$ for the family $G_t:=\frac{d}{dt}F_t$.
This finishes the proof.
\end{proof}

The above Theorem \ref{theor:caractham} implies that $\fl^*$ descends to a morphism
on $\aut_0(M,*)$ whose kernel is $\ham(M,*)$, as we have stated in Theorem \ref{theor:SES}.

\begin{theoremprinc}
There is a short exact sequence of groups
$$1\rightarrow \ham(M,*) \rightarrow \aut_0(M,*) \stackrel{\Fcal}{\rightarrow} \frac{H^1_{dR}(M)[[\nu]]}{\Gamma(M,*)} \rightarrow 1,$$
where $\Fcal(A):=\fl^*(\{A_t\})$ for any smooth path in $\aut_0(M,*)$ joining $A$ to the identity and $\Gamma(M,*):=\fl^*(\pi_1(\aut_0(M,*)))$ where $\pi_1(\aut_0(M,*))$ is the subgroup of $\widetilde{\aut}_0(M,*)$ consisting of classes of smooth loops of automorphisms.
\end{theoremprinc}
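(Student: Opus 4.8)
The plan is to assemble the short exact sequence from the pieces already in hand. First I would recall the universal covering group picture: we have the group morphism $\fl^* : \widetilde{\aut}_0(M,*) \to H^1_{dR}(M)[[\nu]]$ from Theorem \ref{theor:formalflux}, and the natural projection $\pi : \widetilde{\aut}_0(M,*) \to \aut_0(M,*)$ sending a homotopy class of paths $\{A_t\}$ to its endpoint $A_1$. The kernel of $\pi$ is, by definition, $\pi_1(\aut_0(M,*))$, the classes of smooth loops. To descend $\fl^*$ to $\aut_0(M,*)$ modulo the ambiguity of the chosen path, one sets $\Fcal(A) := \fl^*(\{A_t\}) \bmod \Gamma(M,*)$ where $\Gamma(M,*) = \fl^*(\pi_1(\aut_0(M,*)))$; I would check that $\Gamma(M,*)$ is a subgroup of the abelian group $H^1_{dR}(M)[[\nu]]$ (immediate, as the image of a subgroup under a morphism), so the quotient is a group, and that $\Fcal$ is well defined: if $\{A_t\}$ and $\{A'_t\}$ are two smooth paths from $Id$ to $A$, then $\{A_t\} \cdot \{A'_t\}^{-1}$ (concatenation, or equivalently the class of $t \mapsto A_t(A'_t)^{-1}$) is a loop, hence lies in $\pi_1(\aut_0(M,*))$, and since $\fl^*$ is a morphism the two flux values differ by an element of $\Gamma(M,*)$.

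Next I would verify that $\Fcal$ is a group morphism. This follows directly from the multiplicativity of $\fl^*$ on $\widetilde{\aut}_0(M,*)$: given $A, B \in \aut_0(M,*)$ choose paths $\{A_t\}, \{B_t\}$ to the identity, note $\{A_tB_t\}$ is a path from $Id$ to $AB$, and $\fl^*(\{A_tB_t\}) = \fl^*(\{A_t\}) + \fl^*(\{B_t\})$ by Theorem \ref{theor:formalflux}, so passing to the quotient gives $\Fcal(AB) = \Fcal(A) + \Fcal(B)$. Surjectivity of $\Fcal$ is inherited from surjectivity of $\fl^*$ in Theorem \ref{theor:formalflux}: any class in $H^1_{dR}(M)[[\nu]]$ is $\fl^*(\{A_t\})$ for some smooth path from the identity, and then its endpoint $A_1 \in \aut_0(M,*)$ maps onto the corresponding coset.

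Finally, the exactness at $\aut_0(M,*)$, i.e. $\ker \Fcal = \ham(M,*)$, is exactly the content of Theorem \ref{theor:caractham}: $\Fcal(A) = 0$ in $H^1_{dR}(M)[[\nu]]/\Gamma(M,*)$ means that for \emph{some} smooth path $\{A_t\}$ joining $A$ to the identity, $\fl^*(\{A_t\})$ lies in $\Gamma(M,*)$; composing that path with a loop realizing the relevant element of $\Gamma(M,*)$ produces a new smooth path from $Id$ to $A$ with vanishing $\fl^*$, and Theorem \ref{theor:caractham} then yields $A \in \ham(M,*)$. Conversely Theorem \ref{theor:caractham} gives that every $A \in \ham(M,*)$ admits a path with zero flux, so $\Fcal(A) = 0$. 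Injectivity of the inclusion $\ham(M,*) \hookrightarrow \aut_0(M,*)$ and normality were already established in Theorem \ref{theor:hamgp}, completing the sequence. I do not expect a serious obstacle here; the only point needing a little care is the bookkeeping for well-definedness of $\Fcal$ — making sure the difference of two path choices is genuinely a class in $\pi_1(\aut_0(M,*))$ as a subgroup of $\widetilde{\aut}_0(M,*)$, and that concatenation of paths agrees, up to homotopy with fixed endpoints, with the pointwise product used to define the group law on $\widetilde{\aut}_0(M,*)$ (a standard Eckmann--Hilton type argument).
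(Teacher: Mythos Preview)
Your proposal is correct and follows essentially the same route as the paper: check well-definedness of $\Fcal$ via the morphism property of $\fl^*$ and the definition of $\Gamma(M,*)$, deduce that $\Fcal$ is a morphism from Theorem \ref{theor:formalflux}, and identify $\ker\Fcal$ with $\ham(M,*)$ by composing a given path with a suitable loop and invoking Theorem \ref{theor:caractham}. You are slightly more explicit than the paper in noting surjectivity (which the paper leaves implicit from Theorem \ref{theor:formalflux}); the Eckmann--Hilton remark is harmless but not needed here since the group law on $\widetilde{\aut}_0(M,*)$ is already defined via pointwise product.
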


\begin{proof}
First, let us check the map $\mathcal{F}$ is well-defined.
Let $A\in \aut_0(M,*)$ and consider $A_t$ and $A'_t$ two smooth paths joining $A$ to $Id$.
Then, the classes $\{A_t\}$ and $\{A'_t\}$ differ  from an element in $\pi_1(\aut_0(M,*))$.
Hence, 
$$\fl^*(\{A_t\})-\fl^*(\{A'_t\}) \in \fl^*(\pi_1(\aut_0(M,*))).$$
Because we quotiented $H^1_{dR}(M)[[\nu]]$ by $\Gamma(M,*)$, the map $\mathcal{F}$ is well defined.

The map $\mathcal{F}$ is a morphism because $\fl^*$ is a morphism.

It remains to verify that $\ham(M,*)$ is the kernel of $\mathcal{F}$.
Clearly, if $A \in \ham(M,*)$, then $\mathcal{F}(A)=0$.
Now, suppose $\mathcal{F}(A)=0$. By definition, when we take a smooth path
connecting $A$ to $Id$, we have
$$\fl^*(\{A_t\})\in \fl^*(\pi_1(\aut_0(M,*))).$$
Then, one can choose a loop $\{B_t\}\in \pi_1(\aut_0(M,*))$ so that
$$\fl^*(\{A_t\})=\fl^*(\{B_t\}).$$
Now, this means that $\fl^*$ vanishes on the path $\{A_tB_t^{-1}\}$. Then, 
by Theorem \ref{theor:caractham} above, it means that its extremity $A$ is a Hamiltonian automorphisms.
\end{proof}

We give a nice geometric interpretation of the group $\Gamma(M,*)$.
There is a natural way to lift a loop in $\symp_0(M,\omega)$ into a path $B_t$ (\emph{not} necessarily a loop)  of automorphisms of the star product.  Elements in $\Gamma(M,*)$ can be used to measure what is needed to close the path $B_t$ into a loop.

Consider a loop $\varphi_t\in \symp_0(M,\omega)$ generated by the smooth time dependent symplectic vector field
$X_t$. Then, consider the unique solution $B_t^{-1}$ of the equation
\begin{equation} \label{eq:Btq}
\frac{d}{dt}B_t^{-1}=-p^{-1}(X_t)B_t^{-1}, \textrm{ with } B_0^{-1}=Id.
\end{equation}
Now, the path $B_t$ is a lift of $\varphi_t$ in the sense that $\cl(B_t)=\varphi_t$.

Since $\varphi_t$ is a loop, $B_1=\exp(D)$ for some $D\in \nu \der_0(M,*)$. Then the above
path $B_t$ can be closed into the loop $\exp(-tD)B_t$. Because $\cl^{-1}(Id)$ is in bijection
with the vector space $\nu\der_0(M,*)$, there is a well-defined isomorphism
$$q:\{\varphi_t\} \in \pi_1(\symp_0(M,\omega))\mapsto \{\exp(-tD)B_t\} \in \pi_1(\aut_0(M,*)).$$

The formal flux morphism induces a morphism
$$\fl^*_{def}:\pi_1(\symp_0(M,\omega)) \rightarrow H^1_{dR}(M)[[\nu]]:\{\varphi_t\} \mapsto \fl^*(q(\{\varphi_t\})).$$
We call $\fl^*_{def}$ the {\bf deformed flux morphism}.
Its image is $\Gamma(M,*)$, because the map $q$ is an isomorphism. 

\begin{prop} \label{prop:fldef}
If $\varphi_t$ is a loop in $\symp_0(M,\omega)$ generated by $X_t$, write $B_t^{-1}$ the solution of equation (\ref{eq:Btq}) and
$D\in \nu \der_0(M,*)$ such that $B_1=\exp(D)$, then
\begin{equation} \label{eq:fldef}
\fl^*_{def}(\{\varphi_t\})=\int_0^1 [i(X_t)\omega]dt - [i(p(D))\omega].
\end{equation}

If $*$ and $*'$ are two equivalent star products, then $\fl^*_{def}(\{\varphi_t\})=\fl^{*'}(\{\varphi_t\})$ for all
$\{\varphi_t\} \in \pi_1(\symp_0(M,\omega))$.
\end{prop}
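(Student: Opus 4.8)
The plan is to compute $\fl^*(q(\{\varphi_t\}))$ directly from the definitions, then to establish the equivalence-invariance using the conjugation map $C_T$ from Proposition \ref{equivalence}. For the first formula, recall that $q(\{\varphi_t\})$ is the class of the loop $\exp(-tD)B_t$. By definition \eqref{fluxpredef}, the flux of this loop is $\int_0^1 [i(p(E_t))\omega]\,dt$ where $E_t$ is the derivation generating the path $t\mapsto \exp(-tD)B_t$. The first step is to identify $E_t$ via the computation rules of Proposition \ref{prop:rules}: the path $B_t$ is generated by $p^{-1}(X_t)$ (this is precisely equation \eqref{eq:Btq} rewritten for $B_t$, noting Remark \ref{rem:-Dt0} and that $B_t^{-1}$ solves the stated ODE), while the path $\exp(-tD)$ is generated by the constant derivation $-D$. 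Hence by point 1 of Proposition \ref{prop:rules}, the product $\exp(-tD)B_t$ is generated by $E_t = -D + \exp(-tD)\,p^{-1}(X_t)\,\exp(tD)$. By point 3 of the same proposition, $\exp(-tD)\,p^{-1}(X_t)\,\exp(tD) = p^{-1}(X_t) + D_{F_t}$ for some family $F_t\in C^\infty(M)[[\nu]]$, and since quasi-inner derivations lie in the kernel of $F:\der_0(M,*)\to H^1_{dR}(M)[[\nu]]$, we get $[i(p(E_t))\omega] = [i(X_t)\omega] - [i(p(D))\omega]$ for every $t$ (using $p(p^{-1}(X_t))=X_t$ and linearity of $p$). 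Integrating over $t\in[0,1]$ and using that $D$ is $t$-independent gives \eqref{eq:fldef}.

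For the second statement, let $T:(C^\infty(M)[[\nu]],*)\to(C^\infty(M)[[\nu]],*')$ be an equivalence. The key observation is that $T$ conjugates $*$-derivations to $*'$-derivations and intertwines the symbol maps $p$ and $p'$ up to a quasi-inner correction: if $E\in\der_0(M,*)$ then $TET^{-1}\in\der_0(M,*')$ and $p'(TET^{-1}) = p(E)$ (since $T = Id + \nu(\ldots)$, so $TET^{-1}$ and $E$ have the same principal symbol, and more strongly the difference $TET^{-1}-E$ viewed appropriately is quasi-inner — one checks this on a contractible $U$ where $E$ is quasi-inner, because then $TET^{-1}$ is also quasi-inner on $U$). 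Consequently $C_T$ carries the path lift $B_t$ for $*$ to a path lift $TB_tT^{-1}$ for $*'$ with the same classical limit $\varphi_t$, and $TB_1T^{-1} = T\exp(D)T^{-1} = \exp(TDT^{-1})$ with $p'(TDT^{-1}) = p(D)$. Plugging into formula \eqref{eq:fldef} for $*'$ yields exactly $\fl^{*'}_{def}(\{\varphi_t\})$ with the same right-hand side, hence $\fl^*_{def}(\{\varphi_t\}) = \fl^{*'}_{def}(\{\varphi_t\})$.

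The main obstacle is the second step: making precise and proving that $p'(TET^{-1}) = p(E)$, i.e. that conjugation by an equivalence $T$ preserves the $H^1_{dR}(M)[[\nu]]$-class of the symbol. On a contractible open set $U$, a derivation $E$ restricts to $D_{H_U}$ for some local Hamiltonian $H_U$, and $TET^{-1}$ restricts to $T D_{H_U} T^{-1}$, which one must show equals $D'_{TH_U}$ (the quasi-inner $*'$-derivation attached to $TH_U$) — this is immediate from $T(H_U * F) = TH_U *' TF$. Then $p'(TET^{-1})|_U = X_{TH_U}$ and $p(E)|_U = X_{H_U}$, and since $TH_U = H_U + \nu(\ldots)$ with the higher-order corrections globally defined (as $T$ is a global operator), the forms $i(X_{TH_U})\omega$ and $i(X_{H_U})\omega$ differ by the differential of a globally defined function, giving equality of de Rham classes. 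Once this local-to-global bookkeeping is handled cleanly, both parts of the proposition follow without further difficulty.
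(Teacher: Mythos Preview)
Your argument for the first formula is essentially the paper's, just unrolled: the paper invokes the morphism property of $\fl^*$ (Theorem~\ref{theor:formalflux}) to split $\fl^*(\{\exp(-tD)B_t\})=\fl^*(\{B_t\})+\fl^*(\{\exp(-tD)\})$, whereas you recompute this via the computation rules. One small slip: the path $B_t$ is \emph{not} generated by $p^{-1}(X_t)$. From $\frac{d}{dt}B_t^{-1}=-p^{-1}(X_t)B_t^{-1}$ and point~2 of Proposition~\ref{prop:rules} you get that $B_t$ is generated by $B_t\,p^{-1}(X_t)\,B_t^{-1}$, which by point~3 equals $p^{-1}(X_t)+D_{G_t}$. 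This does not affect your conclusion (the extra quasi-inner term dies in cohomology), but the statement as written is incorrect.

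For the equivalence-invariance there is a genuine gap. Your key lemma --- that $[i(p'(TET^{-1}))\omega]=[i(p(E))\omega]$ --- is correct and is exactly what the paper also proves locally. But you then ``plug into formula~\eqref{eq:fldef} for $*'$'' using the lift $TB_tT^{-1}$ and the derivation $TDT^{-1}$. Formula~\eqref{eq:fldef} is stated for the \emph{canonical} lift $B'_t$, i.e.\ the one whose inverse is generated by $-(p')^{-1}(X_t)$; and $TB_tT^{-1}$ is not that lift. Indeed $(TB_tT^{-1})^{-1}$ is generated by $-Tp^{-1}(X_t)T^{-1}$, and your own local computation shows $Tp^{-1}(X_t)T^{-1}=(p')^{-1}(X_t)+D'_{\tilde H_t}$ with a globally defined $\tilde H_t\in\nu C^\infty(M)[[\nu]]$. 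So the two lifts differ by a genuine Hamiltonian correction, and consequently $TDT^{-1}\neq D'$ in general. What you still need is exactly the step the paper supplies: from the relation above one gets $TB_tT^{-1}=B'_t\,A_t^{-B'\tilde H}$, hence $\exp(TDT^{-1})=TB_1T^{-1}=B'_1\exp(D'_K)=\exp(D')\exp(D'_K)$ for some $K\in\nu C^\infty(M)[[\nu]]$, and then point~4 of Proposition~\ref{prop:rules} gives $D'=TDT^{-1}+(\text{quasi-inner})$. Combined with your key lemma this yields $[i(p'(D'))\omega]=[i(p(D))\omega]$, which is what \eqref{eq:fldef} actually requires. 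Alternatively you could argue that the conjugated loop $T\,q(\{\varphi_t\})\,T^{-1}$ is homotopic to $q'(\{\varphi_t\})$ (both project to $\{\varphi_t\}$ under $\cl$, and $q'$ inverts $\cl_*$), and then your key lemma directly gives $\fl^{*'}(C_T(\text{loop}))=\fl^*(\text{loop})$; but that argument is not in your write-up either.
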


\begin{proof}
By construction $q(\{\varphi_t\})=\{\exp(-tD)B_t\}$.
Now, we can compute
\begin{eqnarray*}
\fl^*_{def}(\{\varphi_t\}) & = & \fl^*(\{\exp(-tD)B_t\}) \\
& = & \fl^*(\{B_t\})+ \fl^*(\{\exp(-tD)\}) \\
& = & \int_0^1 [i(X_t)\omega]dt - [i(p(D))\omega]. \\
\end{eqnarray*}

Now, let $*$ and $*'$ be two equivalent star products on $C^{\infty}(M)[[\nu]]$. Consider 
$T=Id + \sum_{r=1}^{\infty}\nu^rT_r : (C^{\infty}(M)[[\nu]],*)\rightarrow (C^{\infty}(M)[[\nu]],*')$
an equivalence of star product. We want to prove 
\begin{equation} \label{eq:fldef=}
\fl_{def}^*(\{\varphi_t\})=\fl_{def}^{*'}(\{\varphi_t\}),
\end{equation}
for all $\{\varphi_t\} \in \pi_1(\symp_0(M,\omega))$. To do that, we will decorate by a $'$ all the objects
consider in the hypothesis but corresponding to the star product $*'$. We will use the bijection $p'$ between
derivations of $*'$ and series of symplectic vector fields. We denote by $(B'_t)^{-1}$ the path generated by
$-(p')^{-1}(X_t)$. We set $D'$ the derivation of $*'$ such that $B'_1=\exp(D')$. To prove
equation $(\ref{eq:fldef=})$, we will show $p'(D')=p(D)+X_F$ for some $F\in C^{\infty}(M)[[\nu]]$.

First, we check that $TB_tT^{-1}=B'_t\exp(D'_K)$ for some $D_K\in \nu \qInn(M,*)$.
For this, consider a good cover $\mathcal{U}$ on $M$. On $U\in \mathcal{U}$, $p^{-1}(X_t)\vert_U=\frac{1}{\nu}[H_t^U,.]_*$ 
and $(p')^{-1}(X_t)\vert_U=\frac{1}{\nu}[H_t^U,.]_{*'}$, for some $H_t^U\in C^{\infty}(U)[[\nu]]$. Then we compute
\begin{equation}
Tp^{-1}(X_t)T^{-1}\vert_U= \frac{1}{\nu}[H_t^U,.]_{*'}+\frac{1}{\nu}[\sum_{r=1}^{\infty}\nu^rT_r(H_t^U),.]_{*'}.
\end{equation}
Now, the function $\tilde{H}$ defined by $\tilde{H}\vert_U:=\sum_{r=1}^{\infty}\nu^rT_r(H_t^U)$ for all $U\in \mathcal{U}$ is globally defined. 
So that, we have
$$TB_tT^{-1}=B'_tA_t^{-B'\tilde{H}},$$
because the two paths are generated by the same family of derivations. This means $TB_1T^{-1}=B'_1\exp(D'_K)$
for $K=-\int_0^1 B'_t\tilde{H}_t dt \in \nu C^{\infty}(M)[[\nu]].$

Now, by definition $TB_1T^{-1}=\exp(TDT^{-1})$. Writing locally on $U\in \mathcal{U}$ the derivation $TDT^{-1}$,
we get $p'(TDT^{-1})=p(D)+X_G$. Since,
$$(B'_1)^{-1}=\exp(-T^{-1}DT) \exp(D'_K),$$
by applying the computation rules \ref{prop:rules}, we obtain $p'(D')=p(D)+X_G+X_K+X_{\tilde{K}}$ for some $\tilde{K}\in C^{\infty}(M)[[\nu]]$.
The proof is over.
\end{proof}

\begin{ex} \label{ex:surf}
Consider a symplectic surface $(\Sigma_g,\omega)$ of genus $g\geq 2$. Then one knows $\pi_1(\symp_0(\Sigma_g,\omega))=\{0\}$, see \cite{polte}. Consequently, for all star product $*$ on $(\Sigma_g,\omega)$, we have $\Gamma(\Sigma_g,*)=\{0\}$. The exact sequence of Theorem \ref{theor:SES} writes 
$$1\rightarrow \ham(\Sigma_g,*) \rightarrow \aut_0(\Sigma_g,*) \stackrel{\Fcal}{\rightarrow} \R^{2g}[[\nu]]\rightarrow 1.$$
\end{ex}

\begin{prop} \label{prop:countflux}
The set of de Rham classes arising at order $0$ and $1$ in $\nu$ in elements of $\Gamma(M,*)$ is at most countable.
\end{prop}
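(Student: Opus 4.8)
The statement concerns $\Gamma(M,*)=\fl^*(\pi_1(\aut_0(M,*)))$, and by the isomorphism $q$ and Proposition \ref{prop:fldef} it equals $\fl^*_{def}(\pi_1(\symp_0(M,\omega)))$. So the task reduces to understanding the orders $0$ and $1$ in $\nu$ of $\fl^*_{def}(\{\varphi_t\})$ for a loop $\{\varphi_t\}$ in $\symp_0(M,\omega)$ generated by $X_t$. The plan is to show these two coefficients are pulled back from data depending only on the \emph{homotopy class} of $\{\varphi_t\}$ in $\pi_1(\symp_0(M,\omega))$, via a map into a countable set. Since $M$ is a closed manifold, $\symp_0(M,\omega)$ has the homotopy type of a CW-complex, hence $\pi_1(\symp_0(M,\omega))$ is countable; pushing a countable set forward under any map gives a countable set. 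Thus the core is simply to exhibit the order-$0$ and order-$1$ parts of $\Gamma(M,*)$ as the image of a map \emph{defined on} $\pi_1(\symp_0(M,\omega))$.

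\medskip

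\noindent\textbf{Key steps, in order.} First I would invoke Proposition \ref{prop:fldef} to write, for a loop $\{\varphi_t\}$ generated by $X_t$ with $B_1=\exp(D)$, $D\in\nu\der_0(M,*)$:
$$\fl^*_{def}(\{\varphi_t\})=\int_0^1[i(X_t)\omega]\,dt-[i(p(D))\omega].$$
Order $0$ in $\nu$: here $p(D)$ starts at order $0$ in $\nu$ by Proposition \ref{prop:lienderaut}, but $D\in\nu\der_0(M,*)$ forces the leading term of $p(D)$ to vanish — actually one must be a bit careful: $D\in\nu\der$ means $p(D)\in\nu\Symp(M,\omega)[[\nu]]$, so $i(p(D))\omega$ has no $\nu^0$ term. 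Hence the order-$0$ part of $\fl^*_{def}(\{\varphi_t\})$ is exactly $\int_0^1[i(X_t)\omega]\,dt$, which is the \emph{classical} flux of the loop $\{\varphi_t\}$; this takes values in the classical flux group $\Gamma(M,\omega)$, which is countable by Banyaga (indeed discrete by Ono). Order $1$ in $\nu$: I would extract the $\nu^1$-coefficient of $-[i(p(D))\omega]$ and argue it factors through the homotopy class of $\{\varphi_t\}$. This is the substantive step — I would argue that the association $\{\varphi_t\}\mapsto (\text{order-}0,\text{order-}1\text{ of }\fl^*_{def})$ is a \emph{group homomorphism} $\pi_1(\symp_0(M,\omega))\to H^1_{dR}(M)\oplus H^1_{dR}(M)$ (the order-$0\oplus$ order-$1$ truncation of $\fl^*_{def}$ composed with $q$, both of which are morphisms by Theorems \ref{theor:formalflux}, \ref{theor:SES} and the remarks after), and that its domain is countable.

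\medskip

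\noindent\textbf{Assembling the argument.} Since $M$ is closed, $\symp_0(M,\omega)$ is (homotopy equivalent to) a separable metrizable space of the homotopy type of a CW complex, so $\pi_1(\symp_0(M,\omega))$ is a countable group. The truncation map $\tau:\pi_1(\aut_0(M,*))\to H^1_{dR}(M)\oplus\nu H^1_{dR}(M)$ sending a class to the $\nu^0$ and $\nu^1$ parts of its $\fl^*$-image is the composition of the group morphism $\fl^*$ (Theorem \ref{theor:formalflux}) with the obvious truncation $H^1_{dR}(M)[[\nu]]\to H^1_{dR}(M)\oplus\nu H^1_{dR}(M)$; pre-composing with the isomorphism $q^{-1}$ identifies this with $\tau\circ\fl^*_{def}$ on $\pi_1(\symp_0(M,\omega))$. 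The set of de Rham classes arising at order $0$ and $1$ in elements of $\Gamma(M,*)$ is precisely the image of $\tau$ restricted to $\Gamma(M,*)$, equivalently the image of $\tau\circ\fl^*_{def}$, i.e. the image of a countable set. Hence it is at most countable.

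\medskip

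\noindent\textbf{Main obstacle.} The delicate point is making rigorous that the order-$1$ coefficient of $\fl^*_{def}(\{\varphi_t\})$ depends only on the \emph{based homotopy class} of the loop $\{\varphi_t\}$, not on the chosen generating vector field $X_t$ or on auxiliary choices in forming the lift $B_t$ — but this is exactly the content of the fact (already established in the excerpt) that $\fl^*$ is well-defined on $\widetilde{\aut}_0(M,*)$ (Proposition \ref{homotdep}) together with the well-definedness of $q$; truncating a well-defined map stays well-defined, so the real work is just bookkeeping of $\nu$-orders. A secondary point is citing the correct statement that $\pi_1$ of the (identity component of the) symplectomorphism group of a closed symplectic manifold is countable; this follows from $\symp_0(M,\omega)$ being an open subset of a manifold-like Fréchet Lie group of the homotopy type of a CW complex, hence having countable homotopy groups.
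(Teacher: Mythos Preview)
Your argument is correct and in fact proves more than the paper claims: since $\fl^*_{def}$ is a well-defined group homomorphism on $\pi_1(\symp_0(M,\omega))$ with image $\Gamma(M,*)$, and since $\pi_1(\symp_0(M,\omega))$ is countable for closed $M$, the entire group $\Gamma(M,*)\subset H^1_{dR}(M)[[\nu]]$ is countable, not just its order-$0$ and order-$1$ truncation.

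The paper takes a different route. Rather than invoking countability of $\pi_1(\symp_0(M,\omega))$ directly, it chooses a normalised representative in the equivalence class of $*$ (with $C_1=\tfrac12\{\cdot,\cdot\}$ and $C_2^-(F,G)=-\Omega_1(X_F,X_G)$) and computes $B_1^{-1}$ explicitly to order $\nu$, obtaining
\[
\fl^*_{def}(\{\varphi_t\})=\int_0^1[i(X_t)\omega]\,dt-\nu\Big[\int_0^1\varphi_t^*(i(X_t)\Omega_1)\,dt\Big]+\nu^2(\ldots).
\]
The conclusion then rests (implicitly) on Banyaga's classical observation that flux-type homomorphisms $\{\varphi_t\}\mapsto[\int_0^1\varphi_t^*(i(X_t)\alpha)\,dt]$ for a fixed closed $2$-form $\alpha$ have at most countable image in $H^1_{dR}(M)$. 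What the paper's approach buys is the explicit formula itself, which is reused later (Proposition~\ref{hampath} and the motivation for Section~\ref{sect:Fed}); what your approach buys is a cleaner argument that works uniformly at every order in $\nu$ and makes transparent that the only input needed is the countability of $\pi_1(\symp_0(M,\omega))$. Your ``main obstacle'' paragraph correctly identifies that well-definedness on homotopy classes is already secured by Proposition~\ref{homotdep} and the construction of $q$, so no further work is required there.
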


\begin{proof}
For our computations, we select a particular star product $*$ on a given equivalence class. Let $\Omega= \nu \Omega_1 + \nu^2 \ldots \in \Omega^2(M)[[\nu]]$ a series of closed $2$-forms that represents the characteristic class parametrizing $*$. Up to equivalence, we can assume that $C_1(.,.)=\frac{1}{2}\{.,.\}$ and
$C_2^-(F,G):= C_2(F,G)-C_2(G,F)=-\Omega_1(X_F,X_G)$.
 
Let $\{\varphi_t\}$ be a path of symplectomorphisms generated by $X_t\in \Symp(M,\omega)$.
We compute its deformed flux at order $1$ in $\nu$.
For this we consider the path $B_t^{-1}$ of automorphisms generated by $-p^{-1}(X_t)$.

We compute 
\begin{equation*}
B_1^{-1}(H)=H + \nu \int_0^1 \varphi_t^*(\Omega_1(X_{t},X_{\varphi_t^{-1*}H}))dt + \nu^2(\ldots)
\end{equation*}
So that, $\fl^*_{def}(\{\varphi_t\})=\int_0^1[i(X_t)\omega]dt + \nu[i(Y_1)\omega] + \nu^2(\ldots)$ for the symplectic vector field $Y_1=\int_0^1 \varphi_t^*(\Omega_1(X_{t},X_{\varphi_t^{-1*}.}))dt$. Moreover, we have
$
i(Y_1)\omega=-\int_0^1 \varphi_t^*(i(X_{t})\Omega_1)dt.
$
Then, we conclude
\begin{equation}
\fl^*_{def}(\{\varphi_t\})= \int_0^1[i(X_t)\omega]dt - \nu \left[\int_0^1 \varphi_t^*(i(X_{t})\Omega_1)dt\right]+\nu^2(\ldots).
\end{equation}

So that, the set of the Rham classes arising at order $0$ and $1$ in $\nu$ of elements of $\Gamma(M,*)$ is at most countable.
\end{proof}

\begin{rem}
The study of $\pi_1(\symp_0(M,\omega))$ through a lifting procedure to loops of automorphisms of star product was also suggested in \cite{miya}.
\end{rem}


\section{Paths of Hamiltonian automorphisms} \label{sect:hampath}

\begin{defi}
The {\bf Lie algebra $\Ham(M,*)$ of $\ham(M,*)$} is the set of derivations $D$ of $*$ such that there exists a smooth path $A:]-\epsilon,\epsilon[\rightarrow \aut_0(M,*)$ for $\epsilon \in \R$ such that $A_t\in \ham(M,*)$ for all $t\in ]-\epsilon,\epsilon[$ and $\frac{d}{dt}|_0A_t=D$, with Lie bracket given by the commutator of derivations.
\end{defi}

One checks $(\Ham(M,*),[,])$ is indeed a Lie algebra, using the computation rules of Proposition \ref{prop:rules}.

\begin{quest} \label{question:algLie}
Is it true that $(\Ham(M,*),[.,.])\cong (\qInn(M,*),[.,.])$?
\end{quest}

By construction, the algebra $\Ham(M,*)$ contains the algebra $\qInn(M,*)$. 
When $H^1_{dR}(M)=0$, a derivation is always of the form $D_H$ for $H\in C^{\infty}(M)[[\nu]]$. Then, $(\Ham(M,*),[.,.])\cong (\qInn(M,*),[.,.])$. However when $H^1_{dR}(M)\neq 0$, we will see the answer is not trivial and depends on the image of $\fl^*_{def}$.

The above question \ref{question:algLie} is equivalent to  the following question : 

\begin{quest} \label{Q1} 
Is any smooth path $A_t\in \ham(M,*)$ generated by a time-dependent Hamiltonian $H_t\in C^{\infty}(M)[[\nu]]$?
\end{quest}

In the classical case, Banyaga \cite{ban2} shows that every path in $\ham(M,\omega)$ is generated by a time dependent Hamiltonian $H_t \in C^{\infty}(M)$.

\begin{theorem} \label{theor:hampath}
Any smooth paths of Hamiltonian automorphisms is generated by a Hamiltonian $H_t\in C^{\infty}(M)[[\nu]]$
if there is no non constant smooth paths in $\fl_{def}(\pi_1(\ham(M,\omega)))\subset H^1_{dR}(M)[[\nu]]$, where $\pi_1(\ham(M,\omega))$ is viewed as a subgroup of $\pi_1(\symp_0(M,\omega))$ using the canonical inclusion.
\end{theorem}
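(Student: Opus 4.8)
The strategy is to take a smooth path $A_t \in \ham(M,*)$, write $\frac{d}{dt}A_t = D_tA_t$ for a smooth family of derivations $D_t$, and show that the obstruction to $D_t$ being quasi-inner for all $t$ is exactly a smooth path in $\fl_{def}(\pi_1(\ham(M,\omega)))$. First I would observe that since each $A_t$ lies in $\ham(M,*) = \Ker\Fcal$, one can choose for each fixed $t$ a smooth path connecting $A_t$ to the identity with vanishing formal flux; the point is to do this in a $t$-dependent way so as to extract a genuine function of $t$ valued in $H^1_{dR}(M)[[\nu]]$. Concretely, I would use the concatenation trick from the proof of Theorem \ref{theor:caractham}: fix a path $A_s^0$ from $Id$ to $A_0$ with $\fl^*(\{A_s^0\}) = 0$ (possible since $A_0 \in \ham(M,*)$), and for each $t$ consider the concatenation of $\{A_s^0\}$ with $\{A_s\}_{0 \le s \le t}$. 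Applying $\fl^*$ to this concatenated path and differentiating in $t$ gives $\frac{d}{dt}\big(\fl^*(\text{concatenation})\big) = [i(p(D_t))\omega]$, by the very definition (\ref{fluxpredef}) of $\fl^*$.

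The key step is then to analyze the function $t \mapsto c(t) := \int_0^t [i(p(D_u))\omega]\, du \in H^1_{dR}(M)[[\nu]]$, which is smooth in $t$ and satisfies $c(0) = 0$. I claim $c(t) \in \fl_{def}(\pi_1(\ham(M,\omega)))$ for every $t$. To see this, note that $A_t \in \ham(M,*)$, so the concatenated path from $Id$ to $A_t$ has a flux lying in $\fl^*(\pi_1(\aut_0(M,*))) = \Gamma(M,*)$ after accounting for the choice of the zero-flux path; more precisely, the difference of the fluxes of two paths joining $Id$ to the same $A_t$ is in $\Gamma(M,*)$, and since one such path (exhibiting $A_t \in \ham$) has zero flux, $c(t) \in \Gamma(M,*)$. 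Then, using that the classical limits $\cl(A_t) =: \varphi_t$ form a path in $\ham(M,\omega)$ and tracking which elements of $\pi_1$ arise, one identifies $c(t)$ as lying in the image $\fl_{def}(\pi_1(\ham(M,\omega)))$ rather than the full $\Gamma(M,*) = \fl_{def}(\pi_1(\symp_0(M,\omega)))$. By hypothesis, a smooth path in that set is constant, so $c(t) \equiv c(0) = 0$, i.e. $\int_0^t [i(p(D_u))\omega]\,du = 0$ for all $t$. Differentiating gives $[i(p(D_t))\omega] = 0$ for all $t$, hence $i(p(D_t))\omega = d K_t$ for a smooth family $K_t \in C^\infty(M)[[\nu]]$ (smoothness of $K_t$ in $t$ follows by a Hodge-theoretic or connection-based choice of primitive, as in the classical argument of Banyaga). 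By Proposition \ref{prop:lienderaut}, $D_t$ is then the quasi-inner derivation $D_{K_t}$, so $A_t$ is generated by the Hamiltonian $K_t$, and by uniqueness in Proposition \ref{flow} it coincides with $A_t^K$ (up to the initial automorphism $A_0$, which is itself handled by the group structure of $\ham(M,*)$ since $A_0 \in \ham$).

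The main obstacle I anticipate is the bookkeeping in the middle step: showing rigorously that $c(t)$ lands in $\fl_{def}(\pi_1(\ham(M,\omega)))$ and not merely in $\Gamma(M,*)$. One must be careful that the loop in $\symp_0(M,\omega)$ whose deformed flux realizes $c(t)$ can actually be taken inside $\ham(M,\omega)$. The natural way to see this is: the path $t \mapsto \varphi_t = \cl(A_t)$ lies in $\ham(M,\omega)$, and comparing the lift $B_t$ of $\varphi_t$ (via equation (\ref{eq:Btq})) with the given path $A_t$, their ``difference'' $A_t B_t^{-1}$ projects to the identity under $\cl$, so closing up produces a loop; one then checks this loop, viewed in $\pi_1(\aut_0(M,*))$, comes from $q$ applied to a loop in $\pi_1(\ham(M,\omega))$, because $\varphi_t$ is a path \emph{within} $\ham(M,\omega)$ and its relevant holonomy loop can be chosen there. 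A secondary technical point is ensuring the primitive $K_t$ depends smoothly on $t$; this is standard — fix a Riemannian metric and take $K_t$ to be the $d^*$-primitive of the exact part plus a smoothly-varying harmonic correction, which is zero here since the class vanishes — but it should be stated. Modulo these two points, the argument runs parallel to Banyaga's classical proof, with $H^1_{dR}(M)[[\nu]]$ replacing $H^1_{dR}(M)$ and the deformed flux replacing the ordinary one.
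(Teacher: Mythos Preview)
Your overall architecture is the paper's: compute the formal flux of the partial paths $\{A_u\}_{u\in[0,t]}$, observe that for each fixed $t$ this lands in $\fl^*_{def}(\pi_1(\ham(M,\omega)))$, invoke the hypothesis to conclude the resulting function of $t$ is constant (hence zero), and differentiate to see the generator is quasi-inner. So the strategy is correct and matches the paper.

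Where the paper is sharper is precisely at the ``main obstacle'' you flag. Rather than bookkeeping with concatenations and the lift $B_t$, the paper first performs two reductions: (i) by the group law it suffices to treat paths with $A_0=Id$; (ii) since $\cl(A_t)\in\ham(M,\omega)$, Banyaga's classical theorem gives a function $F_t\in C^\infty(M)$ generating $\cl(A_t)$, and replacing $A_t$ by $A_t^{-F}A_t$ reduces to a smooth path in $\cl^{-1}(Id)\cap\ham(M,*)$. After this reduction one has $A_t=\exp(D_t)$ with $D_t\in\nu\,\der(M,*)$ and $D_0=0$, and the flux of the partial path is simply $[i(p(D_t))\omega]$ (the commutator corrections in the logarithmic derivative of $\exp$ are quasi-inner). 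Now the loop obtained by comparing $\{A_u\}_{u\in[0,t]}$ with a Hamiltonian path to $A_t$ has $\cl$-image a loop lying entirely in $\ham(M,\omega)$ (the first piece projects to the constant loop $Id$, the second to a Hamiltonian isotopy), so its class visibly comes from $\pi_1(\ham(M,\omega))$ via $q$. This is exactly the point you were worried about, and the reduction dissolves it without any delicate tracking.

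Two minor remarks on your write-up: your invocation of $B_t$ from equation~(\ref{eq:Btq}) is slightly off, since that construction is for \emph{loops} $\varphi_t$, whereas here $\cl(A_t)$ is just a path---what you are really doing is the paper's step (ii) above. And your closing parenthetical about ``up to the initial automorphism $A_0$'' is unnecessary: the statement concerns the generator $D_t$ only, and once $D_t=D_{K_t}$ you are done, regardless of $A_0$.
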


\begin{rem}
In the above Theorem \ref{theor:hampath}, a path in $H^1_{dR}(M)[[\nu]]$ is smooth if and only if its coefficients are smooth paths.
\end{rem}

\begin{proof}
Because $\ham(M,*)$ and  $\fl_{def}(\pi_1(\ham(M,\omega)))$ are groups, it is enough to consider paths starting at the neutral element.

Now, consider $\{A_t\}$ a path of Hamiltonian automorphisms starting at the identity. Then, $\cl(\{A_t\})$ is a path of Hamiltonian diffeomorphisms
which is generated by some $F_t \in C^{\infty}(M)$. So, $\{A_t^{-F}A_t\}$ is a path in $\cl^{-1}(Id)\cap \ham(M,*)$. 
Then it suffices to prove the theorem for paths in $\cl^{-1}(Id)\cap \ham(M,*)$.

Consider a smooth path $\{A_t:=\exp(D_t)\}_{t\in [0,1]}\in \cl^{-1}(Id)\cap \ham(M,*)$ starting at the identity. Then the images of the partial paths $\{A_t\}_{t\in[0,s]}$ for $0\leq s \leq 1$ by $\fl^*$ gives a smooth path $t\mapsto [i(p(D_t))\omega]$ in $H^1_{dR}(M)[[\nu]]$. Because $A_t \in \ham(M,*)$, the path $t\mapsto [i(p(D_t))\omega]$ is in $\fl_{def}(\pi_1(\ham(M,\omega)))$. By hypothesis, the path is contant and $[i(p(D_t))\omega]=[i(p(D_0))\omega]=0$ for all $t$, then $D_t\in \qInn(M,*)$.
\end{proof}

\begin{ex}
Let $(\Sigma_g,\omega)$ be a closed orientable surface of genus $g \geq 1$ equipped with an area form $\omega$. One can show 
$\pi_1(\ham(\Sigma_g,\omega))=0$ for all $g\geq 1$ (see \cite{polte} for an outline of the proof). 
Let $*$ be a star product on $(\Sigma_g,\omega)$.
Then, by Theorem \ref{theor:hampath}, every path $\{A_t\} \in \ham(\Sigma_g,*)$ is generated by a time-dependent Hamiltonian $H_t\in C^{\infty}(\Sigma_g)[[\nu]]$ and then
$$(\Ham(\Sigma_g,*),[.,.])\cong (\qInn(\Sigma_g,*),[.,.]).$$
\end{ex}

\begin{prop} \label{hampath} 
Assume $(M,\omega)$ is a closed symplectic manifold equipped with a star product $*$. Let $A_t$ be a path of Hamiltonian automorphisms, then there exists $H_t:=H^0_t + \nu H^1_t 
\in C^{\infty}_0(M)[[\nu]]$ such that $A_t^{H}=A_t \textrm{ mod }O(\nu^2)$.
\end{prop}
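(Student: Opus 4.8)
The plan is to prove this by working order-by-order in $\nu$, treating the problem as a finite (two-step) approximation to Question \ref{Q1}. Since we only need agreement modulo $O(\nu^2)$, we do not face the full obstruction coming from $\fl^*_{def}(\pi_1(\ham(M,\omega)))$; instead we only need to kill the classes that arise at orders $0$ and $1$ in $\nu$, and Proposition \ref{prop:countflux} tells us exactly those classes form an at most countable set. The strategy is to extract a Hamiltonian $H^0_t$ at order $0$ from the classical theory, then correct at order $1$ using a countability argument analogous to the one Banyaga uses in the classical case to show the Flux group is countable.

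First I would reduce to the case $\cl(A_t)=\mathrm{Id}$. Since $A_t\in\ham(M,*)$, the path $\cl(A_t)$ lies in $\ham(M,\omega)$, and by Banyaga's theorem it is generated by a time-dependent Hamiltonian $F_t^0\in C^\infty_0(M)$; replacing $A_t$ by $A_t^{-F^0}A_t$ (which stays in $\ham(M,*)$ by Theorem \ref{theor:hamgp}), I may assume $A_t=\exp(D_t)$ with $D_t\in\nu\der_0(M,*)$. Write $D_t=\nu D_{1,t}+O(\nu^2)$, so that $p(D_t)=\nu X^{(1)}_t+O(\nu^2)$ for a smooth family of symplectic vector fields $X^{(1)}_t$. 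The path $s\mapsto\fl^*(\{A_t\}_{t\in[0,s]})=\int_0^s[i(p(D_t))\omega]\,dt$ is a smooth path in $H^1_{dR}(M)[[\nu]]$ which, because $A_s\in\ham(M,*)$ for each $s$, takes values in $\Gamma(M,*)=\fl^*_{def}(\pi_1(\symp_0(M,\omega)))$ — more precisely in $\fl^*_{def}(\pi_1(\ham(M,\omega)))$ since $\cl(A_s)\in\ham(M,\omega)$. At order $0$ in $\nu$ this path is identically $0$ (the order-$0$ term of $p(D_t)$ vanishes), and at order $1$ it is the smooth path $s\mapsto\nu\int_0^s[i(X^{(1)}_t)\omega]\,dt$ inside the order-$1$ truncation of $\fl^*_{def}(\pi_1(\ham(M,\omega)))$.

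The key step is then: by Proposition \ref{prop:countflux} the set of de Rham classes arising at order $1$ in elements of $\Gamma(M,*)$ is at most countable, hence the order-$1$ component of the smooth path $s\mapsto\int_0^s[i(X^{(1)}_t)\omega]\,dt$ is a continuous path in $H^1_{dR}(M)$ with values in a countable set, therefore constant, and equal to its value at $s=0$, namely $0$. Thus $\int_0^s[i(X^{(1)}_t)\omega]\,dt=0$ for all $s$, so differentiating gives $[i(X^{(1)}_t)\omega]=0$, i.e. $X^{(1)}_t$ is Hamiltonian: $i(X^{(1)}_t)\omega=dH^1_t$ for a smooth family $H^1_t$, which I normalize to lie in $C^\infty_0(M)$. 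Setting $H_t:=\nu H^1_t$ (plus, before the reduction, the order-$0$ piece $F^0_t$), the family $A^H_t$ generated by $H_t$ satisfies, by Proposition \ref{flow} and Proposition \ref{prop:lienderaut}(1), $A^H_t=\exp(\int_0^t D_{H_s}\,ds)$ with $p(D_{H_s})=\nu X^{(1)}_s+O(\nu^2)$; comparing the defining ODEs order by order, $(A^H_t)^{-1}A_t$ is generated by a derivation that is $O(\nu^2)$, so $A^H_t\equiv A_t\bmod O(\nu^2)$.

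**The main obstacle** I expect is bookkeeping the reduction step cleanly: after conjugating away the classical part via $A_t^{-F^0}A_t$, one must check that the order-$1$ symplectic vector field of the new path is still of the form needed, and that reassembling $H_t$ from the order-$0$ Hamiltonian $F^0_t$ (which generates the classical flow) and the order-$1$ correction $\nu H^1_t$ indeed reproduces $A_t$ modulo $\nu^2$ — this requires unwinding Remark \ref{rem:-Dt0} about the order-$0$ behaviour of solutions of the Heisenberg equation and being careful that $D_{H_t}$ with $H_t=F^0_t+\nu H^1_t$ has the correct order-$0$ term $-\{F^0_t,\cdot\}$ matching $D_{t,0}$ after the conjugation is undone. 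The countability argument itself is immediate once Proposition \ref{prop:countflux} is invoked; the care is entirely in the formal-series normalizations.
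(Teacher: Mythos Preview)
Your proposal is correct and follows essentially the same approach as the paper. The paper's proof is extremely terse --- it simply invokes Proposition~\ref{prop:countflux} to conclude that there are no non-constant smooth paths at orders $0$ and $1$ in $\fl_{def}(\pi_1(\ham(M,\omega)))\subset\Gamma(M,*)$, and then implicitly applies the argument of Theorem~\ref{theor:hampath} truncated at those orders; you have spelled out exactly this argument in detail, including the reduction to $\cl(A_t)=\mathrm{Id}$ and the continuous-path-in-a-countable-set step.
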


\begin{proof}
In Proposition \ref{prop:countflux}, we showed that at order $0$ and $1$ in $\nu$ the group $\Gamma(M,*)$ is at most countable. Since $\fl_{def}(\pi_1(\ham(M,\omega)))\subset \Gamma(M,*)$, there is no non constant smooth path at order $1$ in $\nu$ in $\fl_{def}(\pi_1(\ham(M,\omega)))$. This is enough to guarantee that paths of Hamiltonian automorphisms are generated by a formal function modulo terms in $O(\nu^2)$.
\end{proof}


\section{Computation using Fedosov's star products} \label{sect:Fed}

In this section we give a nice expression of the deformed flux for nice loops of symplectomorphisms. 
In order to make concrete computation we will use a Fedosov's star product. This is not a restriction in view of Proposition \ref{prop:fldef}.

Let $\Omega \in \nu \Omega^2(M)[[\nu]]$ a formal serie of closed 2-forms and 
$\nabla$ a symplectic connection on $(M,\omega)$ (i.e. a torsion free connection such that $\nabla \omega=0$). 
Through this section we will denote by $*_{\Omega,\nabla}$ the star product obtained via the Fedosov 
construction with respect to $\Omega$ and $\nabla$.

\begin{theorem} \label{fl comp} 
Let $\{\varphi_t\}$ be a loop of symplectomorphisms generated by the symplectic vector field
$X_t$ such that $\varphi_t^*\Omega=\Omega$ and $\varphi_{t*}\nabla=\nabla$ for all $t$.
Then, the deformed flux of $\{\varphi_t\}$ defined with the star product $*_{\Omega,\nabla}$ is
\begin{equation} \label{eq:fluxfed}
\fl_{def}^{*_{\Omega,\nabla}}(\{\varphi_t\})=\int_0^1[i(X_t) \omega]dt - \left[\int_0^1 \varphi^*_t i(X_{t}) \Omega dt\right].
\end{equation}
\end{theorem}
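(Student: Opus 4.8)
The plan is to use the Fedosov construction explicitly to compute the derivation $p^{-1}(X_t)$, integrate it to the path $B_t$, and then compare $B_1$ with the formal exponential of a derivation $D$ so that Proposition \ref{prop:fldef} applies. The key point is that when $\varphi_t^*\Omega = \Omega$ and $\varphi_{t*}\nabla = \nabla$, the symplectomorphism $\varphi_t$ preserves the whole Fedosov data (the Weyl bundle structure, the Fedosov connection, the flat sections), so $\varphi_t^*$ acts as an automorphism of $*_{\Omega,\nabla}$ and the lift $B_t$ can be taken to be essentially $(\varphi_t^{-1})^*$ corrected by a quasi-inner term coming from the fact that $X_t$ is only locally Hamiltonian. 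Concretely, I would first recall that for the Fedosov star product the derivation associated to a symplectic vector field $Y$ preserving $\Omega$ and $\nabla$ is, up to a quasi-inner derivation, the Lie-derivative-type operator $-\mathcal{L}_Y$ lifted to flat sections; writing this on a good cover $\mathcal{U}$ where $i(X_t)\omega = dh_t^U$, one gets $p^{-1}(X_t)|_U = \frac{1}{\nu}[h_t^U + \text{(higher Fedosov corrections)}, \cdot\,]_*$, and the crucial observation is that under the invariance hypotheses these higher corrections assemble into the locally defined primitive of $\varphi_t^*(i(X_t)\Omega)$.

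Next I would integrate equation (\ref{eq:Btq}). Because $\varphi_t$ preserves the Fedosov connection, the path $(\varphi_t^{-1})^*$ is itself an automorphism path, and one checks that $B_t^{-1}$ differs from $\varphi_t^*$ only by a path of quasi-inner automorphisms $A_t^{-G_t}$, where $G_t$ is obtained by patching the local Hamiltonians $h_t^U$ together with the $\Omega$-correction; the obstruction to these patching globally is precisely measured by the cohomology classes appearing in (\ref{eq:fluxfed}). Then, using that $\{\varphi_t\}$ is a loop so $\varphi_1 = \mathrm{Id}$, I get $B_1 = \exp(D)$ with $D \in \nu\der_0(M,*)$ and, by the explicit form just described, $i(p(D))\omega = \left[\int_0^1 \varphi_t^* i(X_t)\Omega\, dt\right]$ in $H^1_{dR}(M)[[\nu]]$ — here one uses that the order-zero part of $D$ vanishes since $\varphi_1 = \mathrm{Id}$, so $p(D)$ starts in order $\nu$, matching the fact that $\Omega$ starts in order $\nu$. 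Finally, plugging into formula (\ref{eq:fldef}) of Proposition \ref{prop:fldef},
\begin{equation*}
\fl^*_{def}(\{\varphi_t\}) = \int_0^1 [i(X_t)\omega]\, dt - [i(p(D))\omega] = \int_0^1 [i(X_t)\omega]\,dt - \left[\int_0^1 \varphi_t^* i(X_t)\Omega\, dt\right],
\end{equation*}
which is (\ref{eq:fluxfed}).

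The main obstacle I anticipate is the bookkeeping in the Fedosov construction: one must show carefully that the invariance hypotheses $\varphi_t^*\Omega = \Omega$, $\varphi_{t*}\nabla = \nabla$ imply that $\varphi_t^*$ commutes with the Fedosov flattening map and hence is a genuine automorphism of $*_{\Omega,\nabla}$, and then track exactly how the correction to $(\varphi_t^{-1})^*$ in the solution of (\ref{eq:Btq}) produces the primitive of $\varphi_t^* i(X_t)\Omega$ rather than some other quasi-inner term. This amounts to writing $p^{-1}(X_t)$ in Fedosov form and recognizing the $\Omega$-dependence of its local Hamiltonian — a computation analogous to, and slightly more involved than, the order-$1$ computation carried out in the proof of Proposition \ref{prop:countflux}, which already exhibits the term $-\nu[\int_0^1\varphi_t^* i(X_t)\Omega_1\, dt]$ and serves as a template for the all-orders statement here. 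Everything else (integrating the flow, invoking that $\{\varphi_t\}$ is a loop, applying Proposition \ref{prop:fldef}) is then routine.
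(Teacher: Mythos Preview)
Your proposal is correct and follows essentially the same route as the paper: lift $\{\varphi_t\}$ to the Weyl bundle using the Fedosov data, use the invariance hypotheses $\varphi_t^*\Omega=\Omega$, $\varphi_{t*}\nabla=\nabla$ to see that $\varphi_{t*}$ commutes with the Fedosov differential $\mathcal{D}$, and then identify the element $D$ with $B_1=\exp(D)$ so that Proposition~\ref{prop:fldef} gives the formula. The paper carries out exactly the ``Fedosov bookkeeping'' you flag as the main obstacle: it writes $A_1=\exp\bigl(\tfrac{1}{\nu}\,\mathrm{ad}(S)\bigr)$ with $S=\int_0^1\varphi_{t*}\bigl((QH_t^{\mathcal U})^{\ge 3}+i(X_t)r\bigr)\,dt$, computes $\mathcal{D}S=-\int_0^1\varphi_t^*\,i(X_t)\Omega\,dt$ by an explicit Weyl-algebra calculation (using the Cartan formula~(\ref{eq:lieder}) and the vanishing of $\mathcal{L}_{X_t}\nabla$), and concludes $S=QF^{\mathcal U}-F^{\mathcal U}$ for the desired $F^U$. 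One small caution: your phrasing that $B_t^{-1}$ differs from $\varphi_t^*$ by a \emph{quasi-inner} path is not literally true for intermediate $t$ --- the correction is $\exp$ of $\tfrac{1}{\nu}\,\mathrm{ad}$ of a Weyl section that is not a flat section in general; the content of the computation is precisely that at $t=1$ this section \emph{is} of the form $QF^{\mathcal U}-F^{\mathcal U}$.
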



\subsection{Fedosov construction of star product}

We recall the Fedosov construction $\cite{fed2}$, $\cite{fed}$. 
This construction of star product is obtained by identifying $C^{\infty}(M)[[\nu]]$ with the algebra of flat sections of
the Weyl bundle $\W$ endowed with a flat connection.

The sections of the Weyl bundle are formal series of the form :
\begin{equation*}
a(x,y,\nu):=\sum_{2k+l\geq 0} \nu^k a_{k,i_1\ldots i_l}(x)y^{i_1}\ldots y^{i_l}.
\end{equation*}
The $a_{k,i_1\ldots i_l}(x)$ are, in the indices $i_1,\ldots,i_l$, the components of a symmetric tensor on $M$ and $2k+l$ is the degree
in $\W$. The space of sections of $\W$, denoted by $\Gamma \W$, has a structure of an algebra
defined by the fiberwise product
\begin{equation*}
(a \circ b)(x,y,\nu):= \Big( Exp(\frac{\nu}{2}\Lambda^{ij} \partial_{y^i} \partial_{z^j})a(x,y,\nu)b(x,z,\nu)\Big)|_{y=z}
\end{equation*}

To describe connections on $\W$, we will consider forms with values in the Weyl algebra. Those can
be written in local coordinates as
\begin{equation*}
\sum_{2k+l\geq 0} \nu^k a_{k,i_1\ldots i_l,j_1\ldots j_k}(x)y^{i_1}\ldots y^{i_l}dx^{j_1}\wedge \ldots \wedge dx^{j_k}.
\end{equation*}
The $a_{k,i_1\ldots i_l,j_1\ldots j_k}(x)$ are, in the indices $i_1,\ldots,i_l,j_1,\ldots,j_k$, the components of a tensor on $M$, symmetric in the 
$i$'s and antisymmetric in the $j$'s. The space of such sections, $\Gamma \W\otimes \Lambda^*(M)$, is 
endowed with a structure of algebra. For $a\otimes \alpha$ and $b\otimes \beta$, we define
$(a\otimes \alpha) \circ (b\otimes \beta) := a\circ b \otimes \alpha\wedge \beta$. The space of $\W$-valued forms inherits
the structure of a graded Lie algebra from the graded commutator $[s,s']:=s\circ s'- (-1)^{q_1q_2}s'\circ s$, where
$s$ is a form of degree $q_1$ and $s'$ of degree $q_2$.

The connection $\partial$ in $\W$ is defined by
\begin{equation*}
\partial a := da + \frac{1}{\nu}[\overline{\Gamma},a] \in \Gamma \W\otimes \Lambda^1M.
\end{equation*}
where $\overline{\Gamma}:=\frac{1}{2}\omega_{lk}\Gamma^k_{ij}y^ly^jdx^i$ with $\Gamma^k_{ij}$ the Christoffel 
symbols of a symplectic connection $\nabla$ on $(M,\omega)$. Of course, the connection $\partial$ extends to
a covariant derivative on all $\Gamma \W \otimes \Lambda M$ using the Leibniz rule :
\begin{equation*}
\partial (a\otimes \alpha) := (\partial a) \wedge \alpha + a\otimes d\alpha.
\end{equation*}

The curvature of $\partial$ is denoted by $\partial \circ \partial$ and is expressed in terms
of the curvature $R$ of the symplectic connection $\nabla$. 
\begin{equation*}
\partial\circ \partial a := \frac{1}{\nu}[\overline{R},a],
\end{equation*}
where $\overline{R}:= \frac{1}{4} \omega_{ir}R^r_{jkl}y^iy^jdx^k\wedge dx^l$.

Define $$\delta(a) := dx_k\wedge \partial_{y_k} a=-\frac{1}{\nu}[\omega_{ij}y^i dx^j,a],$$
and 
$$\delta^{-1} a_{pq}:= \frac{1}{p+q}y^ki(\partial_{x^k})a_{pq} \textrm{ if } p+q>0 \textrm{ and } \delta^{-1}a_{00}=0,$$
where $a_{pq}$ is a $q$-forms with $p$ $y$'s and $p+q>0$.
We then have the Hodge decomposition of $\Gamma \W\otimes \Lambda M$ :
$\delta \delta^{-1}a + \delta^{-1} \delta a=a-a_{00}$.

Now, we recall the construction of a flat connection $\D$ on 
$\Gamma \W$ of the form
\begin{equation*}
\D a:=\partial a - \delta a + \frac{1}{\nu}[r,a],
\end{equation*}
where $r$ is a $\W$-valued $1$-form and $\D^2a=0$.

Because,
\begin{equation*}
\D^2 a = \frac{1}{\nu}\left[\overline{R} + \partial r - \delta r + \frac{1}{2\nu}[r,r],a\right],
\end{equation*}
one choose $r$ such that 
\begin{equation*}
\overline{R} + \partial r - \delta r + \frac{1}{2\nu}[r,r] = \Omega ,
\end{equation*}
for a central $2$-form $\Omega$. 
Which means that $\D r= \Omega - \overline{R} + \frac{1}{2\nu}[r,r]$. Then, for all closed central $2$-form $\Omega$,
there exists a unique solution $r \in \Gamma \W \otimes \Lambda^1 M$ of degree at least $3$
of the equation
\begin{equation*}
r=\delta^{-1}(\overline{R} + \partial r + \frac{1}{\nu}r\circ r - \Omega).
\end{equation*}
satisfying $\delta^{-1}r=0$.

Define $\Gamma \W_{\D} := \{a\in \Gamma \W | \D a=0\}$ the algebra of flat sections and
the symbol map $\sigma :a\in \Gamma \W_{\D} \mapsto a_{00}\in C^{\infty}(M)[[\nu]]$. Fedosov showed
that the symbol map is a bijection on flat sections and that $a \in \Gamma \W_{\D} $ is the 
unique solution of
\begin{equation*}
a=a_{00}+ \delta^{-1}(\partial a + \frac{1}{\nu}[r,a]).
\end{equation*}
Let $Q$ be the inverse of $\sigma$. Fedosov defined a star product $*_{\Omega,\nabla}$ on $(M,\omega)$ by
\begin{equation*}
F*_{\Omega,\nabla}G:=\sigma(QF \circ QG).
\end{equation*}

In the sequel we will need some low degree terms of $QF$ for $F\in C^{\infty}(M)$.
\begin{equation} \label{eq:QFleq3}
QF = F + \partial_iFy^i + \frac{1}{2}(\nabla_i X_F)_jy^i y^j + (QF)^{\geq 3}=:(QF)^{<3} + (QF)^{\geq 3},
\end{equation}
where $(\nabla_i X_F)_j = (\nabla_i X_F)^k\omega_{kj}$ and $(QF)^{\geq 3}$ denotes the term of
degree bigger than $3$.


\subsection{Exponentiation of derivations}

In this subsection, we write the solution of the equation (\ref{eq:Heis}) in the Weyl algebra. 
This is the first step in the proof of Theorem \ref{fl comp}. We follow
the book \cite{fed}.

We first translate the equation (\ref{eq:Heis}) in the Weyl algebra. Let $\U$ be a good cover of $M$, then for all $U\in \U$
there exists a serie $H_t^U:= \sum_{r=0}^{\infty} \nu^r H_{t,i}^U \in C^{\infty}(U)[[\nu]]$ such that $D_t\vert_U=D_{H_t^U}$.
We can then consider the local section $QH_t^U$ of $\W$. Because two functions $H_t^U$ and $H_t^{U'}$ differ on $U\cap U'$
by a constant, we can define a global section
$$(QH_t^{\U}-H_t^{\U})(x):=(QH_t^U-H_t^U)(x) \textrm{ if } x\in U.$$
Then, to solve the equation (\ref{eq:Heis}), we build the unique family $A_t$ of automorphisms of $\Gamma\W_{\D}$
such that for all $s\in \Gamma \W_{\D}$ : 
\begin{equation} \label{eq:HeisFed} 
\frac{d}{dt} A_t(s)=\frac{1}{\nu}[QH_t^{\U}-H_t^{\U},A_t(s)],
\end{equation}
with initial condition $A_0=Id$.

The strategy is the same as in proposition $\ref{flow}$. 

We define the natural pull-back
on $\Gamma \W \otimes \Lambda M$ by a symplectomorphism $\varphi$ :
\begin{eqnarray*}
\varphi_*(a(x,y,\nu)\otimes \alpha)& := & a(\varphi(x),(\varphi_{*x})^{-1}y,\nu)\otimes \varphi^*\alpha \nonumber \\
& = & \sum_{2k+l\geq 0} \nu^k a_{k,i_1\ldots i_l}(x)(\partial_{x^{j_1}}\varphi)^{i_1}\ldots (\partial_{x^{j_l}}\varphi)^{i_l}y^{j_1}\ldots y^{j_l}\otimes \varphi^*\alpha.
\end{eqnarray*}
In the paper \cite{gr3}, Gutt and Rawnsley showed that the Lie derivative satisfies the following Cartan
formula. Let $\varphi_t$ be a symplectic isotopy generated by the time dependent symplectic 
vector field $X_t$, then
\begin{equation} \label{eq:lieder}
\frac{d}{dt}\varphi_{t*}=\varphi_{t*}\Big( i(X_t)\D+\D i(X_t) + \frac{1}{\nu}ad_{\circ}( \omega_{ij}X_t^iy^j + \frac{1}{2}(\nabla_iX_t)_jy^iy^j - i(X_t)r)\Big),
\end{equation}
where $ \frac{1}{2}(\nabla_iX_t)_jy^iy^j:= \frac{1}{2}(\nabla_iX_t)^k\omega_{kj}y^iy^j$.

Now, we solve the equation (\ref{eq:HeisFed}). Consider the symplectic vector field $X_t:=D_{t,0}$ where $D_t=D_{t,0}+\nu(\ldots)$. Consider 
the isotopy $\varphi_t$ whose inverse is generated by $-X_t$. Then, if $A_t$ is a solution of equation (\ref{eq:HeisFed}), using Equation (\ref{eq:lieder}), we have 
\begin{eqnarray} \label{eq:recu} 
\frac{d}{dt} \varphi_{t*}^{-1}A_t(.) & = & \varphi_{t*}^{-1}\frac{-1}{\nu}\left[ \omega_{ij}X_{t}^iy^j + \frac{1}{2}(\nabla_iX_{t})_jy^iy^j - i(X_{t})r,A_t(.)\right] \nonumber \\
& & + \frac{1}{\nu}\varphi_{t*}^{-1}[QH^{\U}_t-H_t^{\U},A_t(.)], \nonumber \\
& = & \frac{1}{\nu} \left[ \varphi_{t*}^{-1}\left((QH_{t,0}^{\U})^{\geq 3}+i(X_{t})r+\tilde{H}_t \right),\varphi_{t*}^{-1}A_t(.)\right],
\end{eqnarray}
where 
$$(QH_{t,0}^{\U})^{\geq 3}(x):=(QH_{t,0}^U)^{\geq 3}(x) \textrm{ and }\tilde{H}_t(x)=\sum_{r\geq 1} \nu^r(QH_{t,r}^{U}-H_{t,r}^U)(x) \textrm{ for } x\in U;$$ 
we recall that $(QH_{t,0}^{U})^{\geq 3}$ denotes the terms of degree bigger than $3$ in $QH_{t,0}^{U}$.
Because $\frac{1}{\nu} \varphi_{t*}^{-1}\left((QH_{t,0}^{\U})^{\geq 3}+i(X_{H_t})r+ \tilde{H}_t\right)$ has degree greater or equal than $1$, 
the solution of the equation $(\ref{eq:recu})$ is obtained by exponentiation.
And,
\begin{equation} \label{eq:authamfedosov}
A_t(.)=\varphi_{t*}\exp\left(\frac{1}{\nu}ad\left(\int_0^t\varphi_{s*}^{-1}\left((QH_{s,0}^{\U})^{\geq 3}+i(X_{s})r + \tilde{H}_s\right)ds\right)\right).
\end{equation}


\subsection{The deformed flux in the Fedosov's construction}

Let $\{\varphi_t\}$ be a loop of symplectomorphisms generated by $X_t$. Let us explain how to compute its deformed
flux in the Fedosov's formalism. 

Choose a good cover $\U$ of $M$.
Then, on $U\in \U$, there exists $H_t^{U}\in C^{\infty}(U)$ such that $X_t\vert_U=X_{H_t^U}$.
Next, we solve the equation
\begin{equation*}
\frac{d}{dt} A_t^{-1}(a)=\frac{1}{\nu}\left[ -(QH^{\mathcal{U}}_t-H^{\mathcal{U}}_t),A_t^{-1}(a)\right],
\end{equation*}
for all $a\in \Gamma \W_{\D}$ with initial condition $A_0^{-1}=Id$. By equation (\ref{eq:authamfedosov}), we have
\begin{equation} \label{eq:A1flux}
A_1= \exp\left(\frac{1}{\nu}ad\left(\int_0^1\varphi_{s*}\left((QH^{\mathcal{U}}_s)^{\geq 3}+i(X_{s})r\right)ds\right)\right).
\end{equation}
Then, to compute $\fl_{def}^*(\{\varphi_t\})$ we have to find a serie $Y\in \nu \Symp(M,\omega)$ which writes locally as $Y|_U=X_{F^U}$ for some $F^U\in \nu C^{\infty}(U)[[\nu]]$ such that
$$A_1=\exp \left(\frac{1}{\nu}ad\left((QF^{\mathcal{U}}-F^{\mathcal{U}}\right)\right).$$

\begin{proof}[Proof of Proposition \ref{fl comp}]
We assume that $\{\varphi_t\}$ preserves $\nabla$ and $\Omega$. Recall that 
$\{ \varphi_t\}$ is generated by $X_t$ and we write locally $X_t\vert_U=X_{H_t^U}$ for
$H_t^U \in C^{\infty}(U)[[\nu]]$. We consider the automorphism $A_1$ defined in Equation (\ref{eq:A1flux}).
The goal is to prove that $\int_0^1\varphi_{t*}\left((QH^{\mathcal{U}}_t)^{\geq 3}+i(X_{t})r\right) dt=QF^{\mathcal{U}}-F^{\mathcal{U}}$ with
$F^U$ satisfying $dF^U=\int_0^1 \varphi^*_t i(X_{t}) \Omega dt|_U$.

We compute $\mathcal{D}\int_0^1 \varphi_{t*}\left((QH^{\mathcal{U}}_t)^{\geq 3}+i(X_{t})r\right)dt$. By assumption,
$\varphi_{t*}\circ \partial=\partial \circ \varphi_{t*}$ and $\varphi_{t*}r = r$.
This imply that $\varphi_{t*}\circ \D = \D \circ \varphi_{t*}$. Thus, we have
\begin{eqnarray}
\mathcal{D}\int_0^1 \varphi_{t*}\left((QH^{\mathcal{U}}_t)^{\geq 3}+i(X_{t})r\right)dt & = & \int_0^1 \varphi_{t*}\mathcal{D}\left((QH^{\mathcal{U}}_t)^{\geq 3}+i(X_{t})r\right)dt \nonumber \\
 & = & \int_0^1 \varphi_{t*}\left(-\mathcal{D}(QH^{\mathcal{U}}_t)^{< 3}+\mathcal{D}i(X_{t})r\right)dt. \nonumber \\
 & & 
\end{eqnarray}
Where we use the fact that $QH_t^{\U}$ is locally a flat section.
Remark that the section $\mathcal{D}(QH^{\mathcal{U}}_t)^{< 3}$ is globally defined.
Since $\varphi_{t}$ preserves $r$, using (\ref{eq:lieder}), we have
\begin{eqnarray*}
\int_0^1 \varphi_{t*}\mathcal{D}i(X_{t})rdt & & \\
& & \kern-1.5in  = \int_0^1-\varphi_{t*}\Bigg( i(X_{t})\mathcal{D}r +  \frac{1}{\nu}\Big[ \omega_{ij}X_{t}^iy^j + \frac{1}{2}(\nabla_iX_{t})_jy^iy^j - i(X_{t})r,r\Big]\Bigg)dt \nonumber \\
& & \kern-1.5in = \int_0^1-\varphi_{t*}\Big( i(X_{t})\Omega-i(X_{t})\overline{R}  
 + \frac{1}{\nu}\Big[ \omega_{ij}X_{t}^iy^j + \frac{1}{2}(\nabla_iX_{t})_jy^iy^j ,r\Big]\Big) dt
\end{eqnarray*}
We have to compare the above expression with 
\begin{equation} \label{eq:fedterm1}
-\int_0^1 \varphi_{t*}\mathcal{D}(QH^{\mathcal{U}}_t)^{< 3} dt = -\int_0^1 \varphi_{t*}\mathcal{D}\left(H^{\mathcal{U}}_t + \omega_{ij}X_{t}^iy^j + \frac{1}{2}(\nabla_iX_{t})_jy^iy^j\right)dt.
\end{equation}
Remark that the right hand side is globally defined.
So, we compute
\begin{equation} \label{eq:fedD1}
\mathcal{D}H^{\mathcal{U}}_t=dH^{\mathcal{U}}_t=\delta (\omega_{ij}X_{t}^iy^j).
\end{equation}
Also, we have
\begin{equation} \label{eq:fedD2}
\partial (\omega_{kj}X_{t}^ky^j)=\delta \left(\frac{1}{2}(\nabla_iX_{t})_jy^iy^j\right).
\end{equation}
And,
\begin{equation} \label{eq:fedD3}
\partial \left( \frac{1}{2}(\nabla_iX_{t})_jy^iy^j\right)=\frac{1}{2} (\nabla^2_{ij}X_{t})^l\omega_{lk}y^ky^jdx^i.
\end{equation}
Using equations $(\ref{eq:fedD1})$ to $(\ref{eq:fedD3})$ the equation $(\ref{eq:fedterm1})$ becomes
\begin{eqnarray*}
\int_0^1 \varphi_{t*}\left(\mathcal{D}(-(QH^{\mathcal{U}}_t)^{< 3}\right) dt & = & -\int_0^1 \varphi_{t*}\Big(\frac{1}{2} (\nabla^2_{ij}X_{t})^l\omega_{lk}y^ky^jdx^i \nonumber \\
& & \kern-0.5in +\frac{1}{\nu} \left[ r, \omega_{ij}X_{t}^iy^j + \frac{1}{2}(\nabla_iX_{t})_jy^iy^j\right]\Big) dt.
\end{eqnarray*}
Then,
\begin{equation*}
\mathcal{D}\int_0^1 \varphi_{t*}((QH^{\mathcal{U}}_t)^{\geq 3}+i(X_{t})r)dt= \int_0^1 \varphi_{t*}(-i(X_{t})\Omega+i(X_{t})\overline{R}-\frac{1}{2} (\nabla^2_{ij}X_{t})_ky^ky^jdx^i).
\end{equation*}
Remark that $i(X_{t})\overline{R}-\frac{1}{2} (\nabla^2_{ij}X_{t})_ky^ky^jdx^i=\frac{1}{2} (\mathcal{L}_{X_{t}}\nabla)_{ijk}y^ky^jdx^i$.
Since $\varphi_t$ preserves the symplectic connection, we get
\begin{equation*}
\mathcal{D}\int_0^1 \varphi_{t*}\left((QH^{\mathcal{U}}_t)^{\geq 3} + i(X_{t})r\right)dt= - \int_0^1 \varphi_{t}^*\left(i(X_{t})\Omega\right) dt.
\end{equation*}
Consequently, if $F^U \in C^{\infty}(U)[[\nu]]$ satisfies $dF^U=\int_0^1 \varphi_{t}^*i(X_{t})\Omega dt|_U$, then
$QF^{\mathcal{U}}-F^{\mathcal{U}} = \int_0^1\varphi_{t*}((QH^{\mathcal{U}}_t)^{\geq 3} + i(X_{t})r)dt)$. 
It means that : 
$$\fl_{def}^{*_{\Omega,\nabla}}(\{\varphi_t\})=\int_0^1\left[ i(X_t) \omega \right]dt - \left[\int_0^1 \varphi_{t}^*i(X_{t})\Omega dt \right].$$
The proof is over.
\end{proof}

\begin{ex}
Consider the $2$-torus  
$(\mathbf{T}^2,d\theta_1\wedge d\theta_2)$ with usual coordinates $(\theta_1,\theta_2)$. 
The group $\pi_1(\symp(\mathbf{T}^2,d\theta_1\wedge d\theta_2))$
is known to be generated by the rotations $\{ \varphi_t\}$ et $\{ \psi_t \}$ along the symplectic vector fields
$\partial_{\theta_1}$ and $\partial_{\theta_2}$, see \cite{polte}.

Consider Fedosov's star products of the form 
$*_{\Omega,d}$ with $d$ the flat connection and $\Omega=\sum_{i=1}^{\infty}\nu^i C_i.d\theta_1\wedge d\theta_2$.
Then, all the equivalence classes of star product are represented.
Because the 2-form $d\theta_1\wedge d\theta_2$ and $d$ are preserved by $\{ \varphi_t\}$ and $\{ \psi_t \}$, 
we can use Theorem \ref{fl comp} to compute the deformed flux. We obtain
\begin{equation*}
\fl_{def}^{*_{\Omega,d}}(\{\varphi_t\})  =  d\theta_2(1 - \sum_{i=1}^{\infty}\nu^i C_i) \textrm{ and }
\fl_{def}^{*_{\Omega,d}}(\{\psi_t\})  =   d\theta_1(1- \sum_{i=1}^{\infty}\nu^i C_i).
\end{equation*}
Then, $\Gamma(M,*_{\Omega,d})=< d\theta_2(1 - \sum_{i=1}^{\infty}\nu^i C_i), d\theta_1(1- \sum_{i=1}^{\infty}\nu^i C_i)>_{\Z}$.\\
The exact sequence of Theorem \ref{theor:SES} writes
$$1\rightarrow \ham(T^2,*_{\Omega,d}) \rightarrow \aut_0(T^2,*_{\Omega,d}) \stackrel{\mathcal{F}}{\rightarrow} \frac{H^1_{dR,(c)}(T^2)[[\nu]]}{(1 - \sum_{i=1}^{\infty}\nu^i C_i)< d\theta_2, d\theta_1>_{\Z}} \rightarrow 1.$$
\end{ex}

\begin{cor}
Consider the symplectic manifold $(\mathbf{T}^2,d\theta_1\wedge d\theta_2)$. \\
Two star products $*$ and $*'$ are equivalent if and only if $\Gamma(\mathbf{T}^2,*)=\Gamma(\mathbf{T}^2,*').$
\end{cor}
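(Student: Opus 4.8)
The plan is to read off both implications from the explicit description of $\Gamma(\mathbf{T}^2,*)$ obtained in the preceding example, combined with the classification of star products on $\mathbf{T}^2$ up to equivalence by $H^2_{dR}(\mathbf{T}^2)[[\nu]]\cong\nu\R[[\nu]]$ (via the characteristic class $\Omega=\sum_{i\geq 1}\nu^iC_i\,d\theta_1\wedge d\theta_2$). The forward direction is immediate from Proposition~\ref{prop:fldef}: if $*$ and $*'$ are equivalent then $\fl^*_{def}=\fl^{*'}_{def}$ on all of $\pi_1(\symp_0(\mathbf{T}^2,\omega))$, hence their images $\Gamma(\mathbf{T}^2,*)$ and $\Gamma(\mathbf{T}^2,*')$ coincide; this needs no computation at all.

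For the converse I would argue by contraposition, or equivalently just compare the series of constants. Every equivalence class is represented by some $*_{\Omega,d}$ with $\Omega=\sum_{i\geq 1}\nu^iC_i\,d\theta_1\wedge d\theta_2$ and $d$ the flat connection, and the rotations $\{\varphi_t\},\{\psi_t\}$ generate $\pi_1(\symp_0(\mathbf{T}^2,\omega))\cong\Z^2$ while preserving both $\omega$ and $d$, so Theorem~\ref{fl comp} applies and gives
$$\Gamma(\mathbf{T}^2,*)=\Big\langle\,\big(1-\textstyle\sum_{i\geq 1}\nu^iC_i\big)d\theta_1,\ \big(1-\textstyle\sum_{i\geq 1}\nu^iC_i\big)d\theta_2\,\Big\rangle_{\Z}\ \subset\ H^1_{dR}(\mathbf{T}^2)[[\nu]].$$
The key step is then to recover the series $u:=1-\sum_{i\geq 1}\nu^iC_i\in\R[[\nu]]$ (a unit, since its constant term is $1$) from the subgroup alone. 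This follows because the $\Z$-module $\Gamma(\mathbf{T}^2,*)$ determines $u$ up to sign: its elements are exactly $\{(mu)\,d\theta_1+(nu)\,d\theta_2:(m,n)\in\Z^2\}$, and among the nonzero ones the generator $u\,d\theta_1$ (resp.\ $u\,d\theta_2$) is characterized as a primitive element proportional to $d\theta_1$ (resp.\ $d\theta_2$); the two choices of primitive generator differ only by the sign $\pm u$. Since $u$ and $-u$ have the same associated $\sum_{i\geq 1}\nu^iC_i$ only when $C_i=0$ for all $i$... more carefully, $-u=-1+\sum\nu^iC_i$ has constant term $-1\neq 1$, so the sign is pinned down by requiring the constant term to be $+1$; hence $\Gamma(\mathbf{T}^2,*)$ determines $u$, hence all the $C_i$, hence the characteristic class $[\Omega]\in H^2_{dR}(\mathbf{T}^2)[[\nu]]$, hence the equivalence class of $*$.

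Therefore if $\Gamma(\mathbf{T}^2,*)=\Gamma(\mathbf{T}^2,*')$, writing $*\sim *_{\Omega,d}$ and $*'\sim *_{\Omega',d}$ with characteristic series $\sum\nu^iC_i$ and $\sum\nu^iC'_i$, the equality of subgroups forces $u=u'$, i.e.\ $C_i=C'_i$ for all $i\geq 1$, so $[\Omega]=[\Omega']$ and $*$ is equivalent to $*'$. The only mildly delicate point, and the one I would write out with care, is the purely algebraic claim that a rank-two free $\Z$-submodule of $H^1_{dR}(\mathbf{T}^2)[[\nu]]\cong\R[[\nu]]^2$ of the special form $u\cdot\Z^2$ determines the unit $u\in\R[[\nu]]$ once one normalizes the constant term to $1$ — this is where one uses that the two coordinate directions $d\theta_1,d\theta_2$ are intrinsic (being the integral cohomology lattice of the torus) so that "the primitive element in a given coordinate direction" is well defined. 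No genuine obstacle is expected; the content is entirely packaged in Theorem~\ref{fl comp}, Proposition~\ref{prop:fldef}, and the $H^2_{dR}$-classification.
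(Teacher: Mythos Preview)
Your proposal is correct and follows exactly the route the paper intends: the corollary is stated in the paper without proof, as an immediate consequence of the preceding example together with Proposition~\ref{prop:fldef} and the $H^2_{dR}$-classification, and your write-up simply makes that deduction explicit. The only cosmetic point is that the recovery of $u$ from $u\cdot\Z^2\subset\R[[\nu]]^2$ can be said more directly (if $u\cdot\Z^2=u'\cdot\Z^2$ then $u/u'\in\Z$ and $u'/u\in\Z$, so $u=\pm u'$, and the constant term fixes the sign), without invoking the integral lattice of $H^1$.
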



We think our results together with the example of symplectic surfaces motivate a deeper study of the groups $\Gamma(M,*)$.
In particular, it would be nice to generalize the formula (\ref{eq:fluxfed}) to arbitrary loops in $\symp_0(M,\omega)$.
It would imply $\Gamma(M,*)$ is at most countable at any order in $\nu$. That would mean the hypothesis
in Theorem $\ref{theor:hampath}$ above is always satisfied.

As a concluding remark we mention that the flux morphism can be defined on certain Poisson manifolds \cite{Xu,Ryb}.
It might be interesting to see how the work of this paper extends to star products on Poisson (non symplectic) manifolds.


\end{document}